\renewcommand{\subsection}{\@startsection
{subsection}{2}{0mm}{\baselineskip}{-0.25cm}
{\normalfont\normalsize\em}}
\def\negei{\mathbf e_i}
\def\negej{\mathbf e_j}
\def\negt{\mathbf t}
\def\negQ{\mathbf Q}
\def\negalpha{\text{\boldmath$\alpha$}}
\def\negsigma{\text{\boldmath$\sigma$}}
\def\neg1{\text{\boldmath$1$}}
\def\negbeta{\text{\boldmath$\beta$}}
\def\neggamma{\text{\boldmath$\gamma$}}
\def\negeta{\text{\boldmath$\eta$}}
\def\negdelta{\text{\boldmath$\delta$}}
\def\neggamma{\text{\boldmath$\gamma$}}
\def\negeta{\text{\boldmath$\eta$}}
\def\neg1{\text{\boldmath$1$}}
\def\hH{\widehat{H}}
\def\cC{\mathcal C}
\def\cL{\mathcal L}
\def\cP{\mathcal P}
\def\cX{\mathcal X}
\def\cM{\mathcal M}
\def\NN{\mathbb{N}}
\def\ZZ{\mathbb{Z}}
\def\FF{\mathbb{F}}
\def\RR{\mathbb{R}}
\def\dis{\displaystyle}
\DeclareMathOperator{\divv}{div}
\DeclareMathOperator{\divvp}{div_\infty}
\newtheorem{theorem}{Theorem}[section]
\newtheorem{proposition}[theorem]{Proposition}
\newtheorem{corollary}[theorem]{Corollary}
\newtheorem{lemma}[theorem]{Lemma}
\theoremstyle{definition}
\newtheorem{definition}[theorem]{Definition}
\newtheorem{example}[theorem]{Example}
\theoremstyle{remark}
\newtheorem{remark}[theorem]{Remark}
\title[Generalized Weierstra\ss~semigroups and Poincaré series]{Generalized Weierstra\ss~semigroups\\ and their Poincaré series}
\author{J.~J.~Moyano-Fern\' andez}
 \address{Universitat Jaume I (UJI),  Campus de Riu Sec, Institut Universitari de Matem\` atiques i Aplicacions de Castell\' o , 12071 Castell\' on
de la Plana, Spain.}
  \email{moyano@uji.es}
\author{W.~Ten\'orio}
 \address{Universidade Federal de Uberlândia (UFU), Faculdade de Matemática, Av. J. N. Ávila 2121, 38408-902, Uberlândia, MG , Brazil}
  \email{dersonwt@yahoo.com.br}
\author{F.~Torres}
  \address{University of Campinas (UNICAMP), Institute of Mathematics, 
Statistics and Computer Science (IMECC), R. 
S\'ergio Buarque de Holanda 651, Cidade 
Universit\'aria \lq\lq Zeferino Vaz", 13083-059, Campinas, SP, Brazil}
  \email{ftorres@ime.unicamp.br}
\begin{document}

\begin{abstract}
We investigate the structure of the generalized Weierstra\ss~semigroups at several points on a curve defined over a finite field. We present
a description of these semigroups that enables us to deduce properties concerned with the arithmetical structure of divisors supported on the specified points and their corresponding Riemann-Roch spaces. This characterization allows us to show that the Poincaré series associated with generalized Weierstra\ss~semigroups carry essential information to describe entirely their respective semigroups.
\end{abstract}
\keywords{Generalized Weierstra\ss~semigroups, Riemann-Roch spaces, Poincaré series}
\subjclass[2010]{Primary 14H55; Secondary 14G15, 13D40}

\thanks{The first author was partially supported by the Spanish Government Ministerio de
Econom\'ia, Industria y Competitividad (MINECO), grants MTM2012-36917-C03-03, MTM2015-65764-C3-2-P, MTM2016-81735-REDT and MTM2016-81932-REDT, as well as by Universitat Jaume I, grant P1-1B2015-02. The second author was supported by CNPq, Brazil, under grants 201584/2015-8 and 159852/2014-5, as well as by the IMAC-Institut de Matem\`atiques i Aplicacions de Castell\'o. The third author was supported by CNPq, Brazil, under grant 308326/2014-8.}

\maketitle

\section{Introduction}\label{sec1}

Let $\cX$ be a projective, non-singular, geometrically irreducible algebraic curve over a finite field $\FF$; throughout the paper we will refer to this simply as a \emph{curve}. Let $\FF(\mathcal{X})$ be the field of rational functions of $\cX$ over $\FF$. For $m\geq 1$ pairwise distinct $\FF$-rational points $Q_1,...,Q_m$ on $\cX$, we consider the set
$$
H(\negQ):=\{(\alpha_1,\ldots,\alpha_m)\in \NN_0^m \ : \ \exists \ f\in \FF(\mathcal{X})^\times \ \mbox{with} \ \divvp(f)=\sum_{i=1}^m\alpha_iQ_i\},
$$
where $\divvp(f)$ stands for the pole divisor of the rational function $f$; this is in fact an additive sub-semigroup of $\NN_0^m$ which we will call the \emph{classical Weierstra\ss~semigroup} of $\cX$ at the $m$-tuple $\negQ=(Q_1,...,Q_m)$.

The classical Weierstra\ss~semigroup at one point is a numerical semigroup which goes back to classical works of Riemann, Weierstra\ss~or Hurwitz. An extension to the case of several points was introduced by Arbarello et al.~\cite{A}. The arithmetical properties involved in the special case of two points was extensively investigated by Kim \cite{K} and Homma \cite{H} (see also their joint work \cite{HK}), and references to the general case include Ballico and Kim \cite{BK}, Matthews \cite{Ma1}, Carvalho and Torres \cite{CT}, or the survey by Carvalho and Kato \cite{CK}. Specifically, carrying on the studies of \cite{H,K}, Matthews \cite{Ma1} introduced the notion of \emph{generating set} of $H(\negQ)$ that allows constructing $H(\negQ)$ from the knowledge of a finite number of elements in the semigroup. Since these papers were published, many works have arisen mainly pursuing explicit descriptions of Weierstra\ss~ semigroups of specific curves in order to be applied in the analysis of algebraic-geometric codes.

Motivated by a different interpretation of Weierstra\ss~ numerical semigroups, Delgado \cite{D} provided a generalization to the classical approach of Weierstra\ss~ semigroups at several points on curves over algebraically closed fields; Beelen and Tutas \cite{BT} considered these semigroups in the case of curves over finite fields: writing $v_{Q_i}$ for the valuation of $\FF(\cX)$ associated with $Q_i$, they considered the set of tuples $\rho_\negQ(f):=(-v_{Q_1}(f),\ldots,-v_{Q_m}(f))$, for $f\in \FF(\cX)^{\times}$ with poles only at the points $Q_1,\ldots,Q_m$; this turns out to be the additive sub-semigroup of $\mathbb{Z}^m$
\begin{equation*}\label{gwsem} \hH(\negQ):=\{\rho_\negQ(f)\in \ZZ^m \ : \ f\in R_\negQ\backslash \{0\}\},
\end{equation*}
where $R_\negQ$ denotes the ring of functions of $\cX$ that are regular outside $\{Q_1,\ldots,Q_m\}$, which we call the \emph{generalized Weierstra\ss~ semigroup} of $\cX$ at $\negQ$. 

It is not difficult to see the relation between the \emph{classical} and the \emph{generalized} Weierstra\ss~semigroups, provided that $\#\FF\geq m$ (cf. \cite[Prop. 2]{BT}):
$$
H(\negQ)=\hH(\negQ)\cap \NN_0^m.
$$
An important notion in the study of $\hH(\negQ)$ due to Delgado \cite{D} is that of \emph{maximal elements} (see Def.~\ref{defmaximals}) which in particular allows him to explore symmetry properties of that semigroups. The reader is referred to \cite{Ba1,B,BR,DP,Mo1} for further information.


On the other hand, by using the combinatorial notion of Poincaré series related to a multi-index filtration, the first author \cite{Mo1} introduced the Poincaré series associated with generalized Weierstra\ss~ semigroups for the cases $m=1,2$; there he showed that the knowledge of the series ensures information about the maximal elements of the generalized Weierstra\ss~semigroup.

In this paper we study the \emph{generalized} Weierstra\ss~semigroups at several points and present a generating set in the sense of \cite{Ma1} in terms of the maximal elements. This is related to the characterization of the classical Weierstra\ss~semigroups stated in \cite{Ma1}, but also carries information on the corresponding Riemann-Roch spaces of every divisor supported on the specified points; recall here that any divisor $D=\sum_{i=1}^{m}\mu_iQ_i$ on $\cX$ defines a $\FF$-vector space $\mathcal{L}(D)$---called Riemann-Roch space of $D$---consisting of the rational functions $f$ with poles only at the points with $\mu_i \geq 0$  (and, furthermore, with the pole order of $f$ at $Q_i$ $\leq \mu_i$), and if $\mu_j<0$ having a zero at $Q_j$ or order $\geq \mu_j$.

In particular, we prove that this set of maximal elements---as well as the generating set of $\hH(\negQ)$---is finitely determined. In addition, we extend some results of \cite{Mo1} on the corresponding Poincaré series and conclude that they afford enough information to recover entirely the semigroup, allowing us to interpret these series as a combinatorial invariant of the generalized Weierstra\ss~ semigroups.

Our paper is organized as follows. In Section \ref{sec2} we collect some basic results on the generalized Weierstra\ss~ semigroups and their Poincaré series. Section \ref{sec3} is devoted to the study of a generating set for the generalized Weierstra\ss~ semigroup which is finitely determined; rather than providing such a characterization, we show how these elements are appropriated to describe the Riemann-Roch spaces. In Section \ref{sec4} we investigate the support of the Poincaré series and introduce the semigroup polynomial, which determines the Poincaré series by a functional equation. Finally, we present functional equations for the Poincaré series associated with $\hH(\negQ)$ satisfying a symmetry condition.

\section{Preliminaries}\label{sec2}

Let $\cX$ be a (projective, non-singular, geometrically irreducible, algebraic) curve of genus $g$ defined over a finite field $\FF$ with its function field $\FF(\cX)$. Denote by $\cX (\FF)$ the set of $\FF$-rational points on $\cX$. For $m\geq 2$, let $Q_1,\ldots,Q_m \in \cX(\FF)$ be pairwise distinct. In this section we summarize the main properties of the generalized Weierstra\ss~semigroup $\hH(\negQ)$ defined in the Introduction. We also recall the definition of Poincaré series associated with these algebraic-geometric structures due to \cite{Mo1}.

\subsection{Generalized Weierstra\ss~ semigroups} \label{sec2.1}

Consider the generalized  Weierstra\ss~ semigroup $\hH(\negQ)$ of $\cX$ at $\negQ$. First we want to characterize the elements of $\hH(\negQ)$ in terms of dimensions of Riemann-Roch spaces associated. For this purpose, let us fix some helpful notation: 
\begin{itemize}
\item[$\diamond$] For $\negalpha=(\alpha_1,\ldots,\alpha_m)\in \ZZ^m$, we set $|\negalpha|:=\sum_{i=1}^m \alpha_i$, 
$\cL(\negalpha)$ the Riemann-Roch space associated with the divisor $D(\alpha):=\sum_{i=1}^m \alpha_i Q_i$, and $\ell(\negalpha)$ its $\FF$-dimension.
\item[$\diamond$] Let $I:=\{1,2,\ldots,m\}$. For $\negalpha=(\alpha_1,\ldots,\alpha_m)\in \ZZ^m$ and a non-empty $J\subsetneq I$, we set
$$
\nabla_J(\negalpha):=\{(\beta_1,\ldots,\beta_m)\in \hH(\negQ) \ : \ \beta_j=\alpha_j \ \forall \ i\in J \ \mbox{and} \ \beta_i<\alpha_i \ \forall \ i\in I\backslash J \}
$$
and
$$
\nabla_i(\negalpha):=\nabla_{\{i\}}(\negalpha)
$$
$$
\nabla_i^m(\negalpha):=\{(\beta_1,\ldots,\beta_m)\in \hH(\negQ) \ : \ \beta_i=\alpha_i \ \mbox{and} \ \beta_j\leq\alpha_j \ \text{for} \ j\neq i \}.
$$
\item[$\diamond$] For $\negalpha\in \ZZ^m$, let
$$\nabla(\negalpha):=\bigcup_{i=1}^m \nabla_i(\negalpha).$$
\item[$\diamond$] For any $J\subseteq I$, denote by $\textbf{1}_J$ the $m$-tuple whose the $j$-th coordinate $1$ if $j\in J$ and $0$ otherwise; for instance, $\textbf{1}_I=\textbf{1}$ is the all 1 $m$-tuple, $\textbf{1}_\emptyset=\mathbf{0}$ is the all zero $m$-tuple, and $\negei=\textbf{1}_{\{i\}}$. 
\end{itemize} 

The following result was originally proved by Delgado \cite[p. 629]{D} in the case of algebraically closed fields. For the general case, it can be adapted by using ideas from Carvalho and Torres \cite[Lem. 2.2]{CT}.

\begin{proposition}\label{prop1.1} Let $\negalpha\in \ZZ^m$ and suppose that $\#\FF\geq m$, then
\begin{enumerate}[\rm (1)]
\item  $\negalpha\in \hH(\negQ)$ if and only if $\ell(\negalpha)=\ell(\negalpha-\negei)+1$ for all  $i\in I$;
\item $\nabla_i^m(\negalpha)=\emptyset$ if and only if $\ell(\negalpha)=\ell(\negalpha-\negei)$, for every $i\in I$.
\end{enumerate}  
\end{proposition}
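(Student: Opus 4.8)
The plan is to reduce everything to a careful analysis of how the Riemann--Roch space $\cL(\negalpha)$ sits inside $\cL(\negalpha-\negei)$ at a single point $Q_i$, and then to use the hypothesis $\#\FF \geq m$ to produce functions with prescribed pole behavior at several of the $Q_i$ simultaneously. The basic observation is that $\cL(\negalpha-\negei) \subseteq \cL(\negalpha)$ with codimension at most one, since $D(\negalpha) = D(\negalpha-\negei) + Q_i$; hence for each $i$ either $\ell(\negalpha) = \ell(\negalpha-\negei)$ or $\ell(\negalpha) = \ell(\negalpha-\negei)+1$, and the latter happens precisely when there exists $f \in \cL(\negalpha)$ with $v_{Q_i}(f) = -\alpha_i$ exactly. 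This already gives the ``only if'' direction of (1): if $\negalpha \in \hH(\negQ)$, pick $f \in R_\negQ$ with $\rho_\negQ(f) = \negalpha$; then $f \in \cL(\negalpha)$ but $f \notin \cL(\negalpha - \negei)$ for every $i$, forcing $\ell(\negalpha) = \ell(\negalpha-\negei)+1$ for all $i \in I$. Similarly, for (2), the set $\nabla_i^m(\negalpha)$ is nonempty exactly when there is a function in $R_\negQ$ with pole order exactly $\alpha_i$ at $Q_i$ and pole order at most $\alpha_j$ at each $Q_j$, $j \neq i$; such a function lies in $\cL(\negalpha) \setminus \cL(\negalpha - \negei)$, so $\nabla_i^m(\negalpha) \neq \emptyset$ implies $\ell(\negalpha) > \ell(\negalpha-\negei)$, giving one implication of (2) for free.

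For the converse directions, where we must \emph{build} a function in $R_\negQ$ with the right total pole divisor, the key device is a ``gluing'' argument: if for each $i$ there is $f_i \in \cL(\negalpha)$ with $v_{Q_i}(f_i) = -\alpha_i$, one wants an $\FF$-linear combination $f = \sum_i c_i f_i$ whose pole order at \emph{each} $Q_i$ is exactly $\alpha_i$, equivalently $\rho_\negQ(f) = \negalpha$. Writing, for each pair $(i,j)$, the ``leading coefficient'' of $f_j$ at $Q_i$ (the coefficient of $t_i^{-\alpha_i}$ in a local expansion, which is $0$ unless $f_j$ actually has a pole of that order), the condition that $f$ have pole order exactly $\alpha_i$ at $Q_i$ for all $i$ becomes the non-vanishing of $m$ linear forms in $(c_1,\dots,c_m)$, each of which is not identically zero (the $i$-th form has a nonzero coefficient on $c_i$). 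Since $\FF$ has at least $m$ elements, a standard avoidance argument --- a union of $m$ proper affine hyperplanes cannot cover $\FF^m$, or more carefully one argues coordinate by coordinate as in Carvalho--Torres \cite[Lem. 2.2]{CT} --- yields a choice of $\negc \in \FF^m$ avoiding all of them, hence the desired $f$. This proves ``if'' in (1), and an entirely analogous argument (now gluing functions witnessing pole order exactly $\alpha_i$ at $Q_i$ and at most $\alpha_j$ elsewhere, when $\nabla_i^m(\negalpha) \neq \emptyset$ for each $i$, noting the trivial case handled by $\negalpha \in \hH(\negQ)$ when all are nonempty) handles the remaining implication of (2) --- indeed for (2) one only needs the statement ``for every $i$'' so it suffices to produce a single witnessing function for the fixed $i$, which requires no gluing at all beyond adjusting pole orders at the other points downward.

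The main obstacle is the gluing step over a finite field: over an algebraically closed field one has arbitrarily many scalars and the hyperplane-avoidance is immediate, but over $\FF$ one must check that the bound $\#\FF \geq m$ genuinely suffices. I would handle this by ordering the points and inductively choosing $c_1, c_2, \dots$: having fixed $c_1,\dots,c_{k-1}$ so that the partial sum already has pole order exactly $\alpha_i$ at $Q_i$ for $i < k$, the $k$-th leading-coefficient form restricted to the remaining variable $c_k$ is an affine function $a c_k + b$ with $a \neq 0$, so at most one value of $c_k$ is forbidden at step $k$; but one must also not destroy the conditions already achieved at $Q_1,\dots,Q_{k-1}$, which again forbids at most finitely many values --- a more careful bookkeeping (as in \cite{CT}) shows the total number of excluded values of $c_k$ is at most $m-1$, so $\#\FF \geq m$ leaves a valid choice. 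Granting Proposition \ref{prop1.1}'s stated hypothesis, this is exactly the adaptation alluded to in the text, and once the function is constructed both equivalences follow by combining it with the codimension-one observation above.
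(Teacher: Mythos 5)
Your proof follows the paper's argument essentially step for step: the forward direction of (1) and both directions of (2) come from identifying $\cL(\negalpha)\backslash\cL(\negalpha-\negei)$ with the functions $f$ satisfying $\rho_\negQ(f)\in\nabla_i^m(\negalpha)$, and the converse of (1) is the same gluing of the $f_i$ by a linear combination $\sum_i c_i f_i$ chosen outside $m$ hyperplanes, exactly as in the paper. One caveat on your justification of the avoidance step at the boundary case $\#\FF=m$: the relevant forms $L_i(\negc)=\sum_j a_{ij}c_j$ are homogeneous, so the $m$ hyperplanes are linear and all contain the origin, giving $\#\bigcup_i\ker L_i\leq m(q^{m-1}-1)+1<q^m$; for genuinely \emph{affine} hyperplanes this covering bound can fail when $q=m$ (e.g.\ $q$ parallel translates cover everything), and likewise the greedy coordinate-by-coordinate choice can be forced to exclude all $m$ values of the last coordinate (already over $\FF_2$ with $L_1=c_1+c_2$, $L_2=c_2$), so the global counting formulation for linear hyperplanes is the one to keep.
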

\begin{proof}
(1) If $\negalpha\in \widehat{H}(\negQ)$, then there exists $f\in R_\negQ$ such that $v_{Q_i}(f)=-\alpha_i$ for all $i\in I$, and therefore $f\in \cL(\negalpha)\backslash \cL(\negalpha-\negei)$ for every $i\in I$. Thus, $\ell(\negalpha-\negei)=\ell(\negalpha)-1$ for all $i\in I$. Conversely, from $\ell(\negalpha)=\ell(\negalpha-\negei)+1$ for all $i\in I$, there exists $f_i\in \cL(\negalpha)\backslash \cL(\negalpha-\negei)$ for every $i\in I$. Hence, $v_{Q_i}(f_i)=-\alpha_i$ and $v_{Q_j}(f_i)\geq -\alpha_j$ if $j\neq i$. Consider 
$$
f_i=a_{ij}t_{Q_j}^{v_{Q_j}(f_i)}+\cdots \ \ \  \in \FF(\!(t_{Q_j})\!)
$$
the local expansion of $f_i$ at $Q_j$ as a Laurent series in the indeterminate $t_{Q_j}$. We claim that there exists $(b_1,\ldots,b_m)\in \FF^m$ such that $f=\sum_{i=1}^m b_if_i\in R_\negQ$ and $v_{Q_j}(f)=-\alpha_j$ for all $j\in I$. Indeed, taking $(b_1,\ldots,b_m)\in \FF^m$ outside the union of $m$ hyperplanes, it guarantees that $v_{Q_j}(f)=-\alpha_j$ for every $j$. But as $\#\FF\geq m$, this choice of $(b_1,\ldots,b_m)\in \FF^m$ can always be done.\\
\noindent (2) Note that $\ell(\negalpha)=\ell(\negalpha-\negei)+1$ if and only if there exists $f\in \cL(\negalpha)\backslash \cL(\negalpha-\negei)$, which is equivalent to $\rho_\negQ(f)\in \nabla^m_i(\negalpha)$.
\end{proof}

\begin{remark}
As observed in \cite[p. 214]{CT}, the assumption on the cardinality of $\FF$ is indeed essential for this characterization. 
\end{remark} 

\begin{corollary}  
Let $\negalpha\in \ZZ^m$ and assume that $\#\FF\geq m.$ If $|\negalpha|< 0$ then $\negalpha\not\in \hH(\negQ)$. Furthermore, $\negalpha\in \hH(\negQ)$ whenever $|\negalpha|\geq 2g$.
\end{corollary}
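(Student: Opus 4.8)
The first assertion follows directly from the definition of $\hH(\negQ)$ by a degree count. Suppose $\negalpha\in\hH(\negQ)$, so there is a nonzero $f\in R_\negQ$ with $\rho_\negQ(f)=\negalpha$, that is, $v_{Q_i}(f)=-\alpha_i$ for all $i\in I$. Since $f$ is regular off $\{Q_1,\dots,Q_m\}$, its divisor can be written $\divv(f)=-D(\negalpha)+E$ with $E\geq 0$ an effective divisor supported away from the $Q_i$. Taking degrees gives $0=\degg\divv(f)=-|\negalpha|+\degg E$, so $|\negalpha|=\degg E\geq 0$; contrapositively, $|\negalpha|<0$ implies $\negalpha\notin\hH(\negQ)$. (Alternatively one may invoke Proposition~\ref{prop1.1}(1): membership forces $\ell(\negalpha)\geq 1$, hence $\degg D(\negalpha)=|\negalpha|\geq 0$.)

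For the second assertion I would use Proposition~\ref{prop1.1}(1), which applies since $\#\FF\geq m$. Assume $|\negalpha|\geq 2g$. Then $\degg D(\negalpha)=|\negalpha|\geq 2g-1$ and, for each $i\in I$, $\degg D(\negalpha-\negei)=|\negalpha|-1\geq 2g-1$, so both divisors are non-special. The Riemann--Roch theorem then yields $\ell(\negalpha)=|\negalpha|+1-g$ and $\ell(\negalpha-\negei)=|\negalpha|-g$, whence $\ell(\negalpha)=\ell(\negalpha-\negei)+1$ for every $i\in I$. By Proposition~\ref{prop1.1}(1) this means exactly $\negalpha\in\hH(\negQ)$.

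No genuine difficulty arises; the only points to watch are that the single hypothesis $|\negalpha|\geq 2g$ simultaneously pushes $D(\negalpha)$ and every $D(\negalpha-\negei)$ into the non-special range $\degg\geq 2g-1$, and that the hypothesis $\#\FF\geq m$ is precisely what is needed to invoke Proposition~\ref{prop1.1}.
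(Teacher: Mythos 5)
Your proof is correct and is precisely the intended argument: the paper states this corollary without proof immediately after Proposition \ref{prop1.1}, and the evident route is the one you take --- a degree count on the principal divisor $\divv(f)$ for the first claim, and Riemann--Roch in the non-special range $\deg \geq 2g-1$ combined with Proposition \ref{prop1.1}(1) for the second. Both steps, including your remark on where the hypothesis $\#\FF\geq m$ is actually used, are accurate.
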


We next recall the notion of maximality due to Delgado in \cite{D}. As we shall see, this concept translates arithmetical properties of Riemann-Roch spaces $\cL(\negalpha)$ into subsets of $\ZZ^m$ related to $\negalpha$.

\begin{definition} \label{defmaximals}
 An element $\negalpha$ of $\hH(\negQ)$ is said \emph{maximal} if $\nabla(\negalpha)=\emptyset$. If moreover $\nabla_J(\negalpha)=\emptyset$ for every non-empty subset $J\subsetneq I$, $\negalpha$ is called an \emph{absolute maximal}. Denote respectively by $\cM(\negQ)$ and $\Gamma(\negQ)$ the sets of maximal  and absolute maximal elements of $\hH(\negQ)$.
\end{definition}

Notice that when $m=2$, every maximal element is also absolute maximal.

\subsection{Poincaré series associated with generalized Weierstra\ss~ semigroups} \label{sec2.2}

Since the ring $R_\negQ$ of functions of $\cX$ that are regular outside $\{Q_1,\ldots,Q_m\}$ can be written as
$$R_\negQ=\bigcup_{\negalpha\in \ZZ^m} \cL(\negalpha),$$ and $\cL(\negalpha)\subseteq \cL(\negbeta)$ whenever $\negalpha\leq \negbeta$ for $\negalpha,\negbeta\in \ZZ^m$, 
we have a multi-index filtration of $R_\negQ$ by Riemann-Roch spaces $\cL(\negalpha)$ which are $\FF$-vector spaces of finite dimension $\ell(\negalpha)$. 
Associated with the filtration $(\cL(\negalpha))_{\negalpha\in \ZZ^m}$, we can define the formal series
\begin{equation*}
L(\negt):=\sum_{\negalpha\in\ZZ^m} d(\negalpha) \cdot\negt^\negalpha,
\end{equation*}
where $\negt^\negalpha$, for $\negalpha=(\alpha_1,\ldots,\alpha_m)\in \ZZ^m$, stands for the monomial $t_1^{\alpha_1}\cdots t_m^{\alpha_m}$, and $$d(\negalpha):=\dim(\cL(\negalpha)/\cL(\negalpha-\textbf{1})).$$

\begin{remark}\label{rmk:formalseries}
By a  $\ZZ$-valued \emph{formal series} (or a \emph{formal distributions}) in the variables $t_1,\ldots,t_m$, we mean a formal expressions of type $S(\negt)=\sum_{\negalpha\in \ZZ^m} s(\negalpha)\negt^\negalpha$ for $s(\negalpha)\in \ZZ$. When $s(\negalpha)=0$ for all but finitely many $\negalpha\in \ZZ^m$, we refer to $S(\negt)$ as a Laurent polynomial. The support of $S(\negt)$ is the set $\{\negalpha\in \ZZ^m \ : \ s(\negalpha)\neq 0\}$. The set of $\ZZ$-valued formal series has a polynomial-like $\ZZ$-module structure in the sense that addition and scalar multiplication are performed like for polynomials. However, the usual multiplication rule can not be used since it does not make sense in general. There, with the usual multiplication rule, the set of $\ZZ$-valued formal series is no longer a ring, but this operation gives us it is a module over the ring of Laurent polynomials over $\ZZ$. For further details on these objects, we refer the reader to \cite{Kac}.
\end{remark}

Observe that when $m=1$, $L(\negt)$ is just a formal power series. It codifies the elements of the Weierstra\ss~ numerical semigroup $\hH(Q_1)=H(Q_1)$, since $\alpha\in \hH(Q_1)$ if and only if $d(\alpha)=1$. We thus have
\begin{equation*}
L(t)=\sum_{\alpha\in \hH(Q_1)} t^\alpha.
\end{equation*}

However, this does not remain true for several points, i.e., there are elements outside $\hH(\negQ)$ appearing in the support of $L(\negt)$; cf. the so-called gaps in \cite{CT}. It thus motivates a more convenient definition of formal series related to $\hH(\negQ)$.

In order to make a precise description of Poincaré series $P(\negt)$ associated with generalized Weierstra\ss~ semigroups, we consider some auxiliary formal series that shall enable us to explore some computational aspects of $P(\negt)$. Let  
$$
Q(\negt):=\prod_{i=1}^m (1-t_i)\cdot L(\negt).
$$
Writing $Q(\negt)=\sum_{\negalpha\in\ZZ^m} q(\negalpha) \cdot\negt^\negalpha$, its coefficients $q(\negalpha)$ are given exactly by
\begin{equation}\label{coef} q(\negalpha)=\sum_{J\in \cP(I)} (-1)^{\#J} d(\negalpha-\textbf{1}_J),\end{equation}
where $\cP(I)$ denotes the power set of $I$.\\

For each $i\in I$, let us also consider the formal series
$$
P_i(\negt):=\sum_{\negalpha\in \ZZ^m} p_i(\negalpha)\cdot \negt^\negalpha
$$
whose coefficients $p_i(\negalpha)$ are
\begin{equation}\label{icoef}
p_i(\negalpha)=(-1)^{m-1}\sum_{J\in \cP(I\backslash\{i\})} (-1)^{\#J} d_i(\negalpha-\textbf{1}+\textbf{1}_J+\negei),
\end{equation}
where $d_i(\negbeta)=\ell(\negbeta)-\ell(\negbeta-\negei)$  for $\negbeta\in \ZZ^m$. 

\begin{lemma} \label{serieaux} Let $\negalpha\in \ZZ^m$ and $i\in I$. The formal series $P_i(\negt)$ does not depend on $i$.
\end{lemma}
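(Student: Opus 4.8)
The plan is to show that for any two indices $i, k \in I$, the coefficients $p_i(\negalpha)$ and $p_k(\negalpha)$ coincide for every $\negalpha \in \ZZ^m$. Since the definition \eqref{icoef} already symmetrizes over subsets of $I \setminus \{i\}$, the natural strategy is to relate $P_i(\negt)$ back to the single series $Q(\negt)$, whose coefficients \eqref{coef} are manifestly independent of any choice of index. First I would rewrite $d_i(\negbeta) = \ell(\negbeta) - \ell(\negbeta - \negei)$ using the obvious telescoping identity: for any $J \subseteq I \setminus \{i\}$ one has $d_i(\negbeta) = \sum_{K \subseteq J} d(\negbeta - \textbf{1}_K) \cdot (\text{something})$ — more precisely, I would use that summing $d(\negbeta - \textbf{1}_{K})$ over a chain recovers a difference of $\ell$'s. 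The cleanest route: observe that $\ell(\negbeta) = \sum_{\negomega \leq \negbeta} d(\negomega)$ in the appropriate filtered sense, so $d_i(\negbeta)$ is a difference of two such sums differing only in the $i$-th slot.

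The key computation is then to substitute the expression \eqref{icoef} for $p_i(\negalpha)$ in terms of the $d_i(\cdot)$ values, replace each $d_i$ by its expansion in terms of the $d(\cdot)$ values, and collect terms. I expect that after this substitution the alternating sum over $\cP(I \setminus \{i\})$ combined with the telescoping in the $i$-th coordinate reassembles exactly the alternating sum over all of $\cP(I)$ appearing in \eqref{coef}, yielding the identity $p_i(\negalpha) = (-1)^{m-1} \cdot (-1) \cdot q(\text{some shift of } \negalpha)$, or more plausibly $p_i(\negalpha) = \pm q(\negalpha)$ up to a fixed translation of the argument by $\textbf{1}$ or $\negei$ that does not depend on $i$. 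Tracking the shift carefully is essential: the term $\negalpha - \textbf{1} + \textbf{1}_J + \negei$ in \eqref{icoef} is engineered so that the $\negei$ compensates for the fact that $d_i$ already "uses up" the $i$-th coordinate, so when the telescoping is carried out the net argument shift becomes uniform in $i$.

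The main obstacle will be the bookkeeping of signs and index shifts in the double alternating sum — making sure that the $(-1)^{m-1}$ prefactor, the $(-1)^{\#J}$ inside, and the signs produced by expanding $d_i$ into $d$'s all combine correctly to match \eqref{coef} rather than producing a spurious sign or an index-dependent offset. A secondary point to handle with care is convergence/well-definedness: these are formal series (Remark \ref{rmk:formalseries}), so I must check that for each fixed $\negalpha$ only finitely many terms contribute to the sums being manipulated, which follows from $\ell(\negbeta) = 0$ for $|\negbeta| < 0$ (so $d(\negomega)$ and $d_i(\negbeta)$ vanish outside a translate of the positive region), justifying the rearrangement of terms. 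Once $P_i(\negt)$ is expressed purely through $Q(\negt)$ with no reference to $i$, independence of $i$ is immediate, and I would record the resulting identity since it will presumably be reused when the Poincaré series $P(\negt)$ is assembled from the $P_i$.
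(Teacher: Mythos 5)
Your overall strategy---rewriting $p_i(\negalpha)$ in terms of quantities that make no reference to $i$---is sound, but the identity you predict as the endpoint of the computation is false, and that is the crux of the proof. You expect $p_i(\negalpha)=\pm q(\negalpha+\text{shift})$ for a fixed translation. This cannot hold: Proposition \ref{serieQP} (proved immediately after this lemma) gives $p(\negalpha)-p(\negalpha-\textbf{1})=q(\negalpha)$, so $P(\negt)$ is obtained from $Q(\negt)$ by inverting the factor $(1-t_1\cdots t_m)$, not by a monomial shift; concretely, for $m=2$ one computes $p_1(\negalpha)=\ell(\negalpha)-\ell(\negalpha-\mathbf{e}_1)-\ell(\negalpha-\mathbf{e}_2)+\ell(\negalpha-\textbf{1})$, which is not $\pm q$ of any translate of $\negalpha$. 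The correct index-free expressions are $p_i(\negalpha)=\sum_{k\geq 0}q(\negalpha-k\textbf{1})$ (a locally finite telescoping sum, not a single term) or, more directly, $p_i(\negalpha)=\sum_{K\in \cP(I)}(-1)^{\#K}\ell(\negalpha-\textbf{1}_K)$. Relatedly, your ``cleanest route'' identity $\ell(\negbeta)=\sum_{\negomega\leq\negbeta}d(\negomega)$ is not correct: $d$ telescopes $\ell$ only along the diagonal direction, the valid statement being $\ell(\negbeta)=\sum_{k\geq 0}d(\negbeta-k\textbf{1})$; summing over the full lower cone does not recover $\ell$.

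The good news is that the minimal repair of your idea yields a complete proof, arguably cleaner than the paper's: after the reindexing $J\mapsto I\backslash(J\cup\{i\})$, Eq.~\eqref{icoef} becomes $p_i(\negalpha)=\sum_{J\in\cP(I\backslash\{i\})}(-1)^{\#J}d_i(\negalpha-\textbf{1}_J)$; substituting $d_i(\negbeta)=\ell(\negbeta)-\ell(\negbeta-\negei)$ (no expansion into $d$'s needed), the two terms produced by each $J$ are precisely the summands indexed by $J$ and by $J\cup\{i\}$ in $\sum_{K\in\cP(I)}(-1)^{\#K}\ell(\negalpha-\textbf{1}_K)$, an expression visibly symmetric in all indices. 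No infinite sums and no reference to $Q(\negt)$ are required. The paper argues differently: it establishes the two-index relation $d_i(\negbeta)-d_i(\negbeta-\negej)=d_j(\negbeta)-d_j(\negbeta-\negei)$ (both sides equal $\ell(\negbeta)-\ell(\negbeta-\negei)-\ell(\negbeta-\negej)+\ell(\negbeta-\negei-\negej)$) and pairs the subsets of $I\backslash\{i\}$ according to whether they contain $j$, concluding $p_i(\negalpha)=p_j(\negalpha)$ directly. Either route works; yours, as written, aims at an identity that does not exist, so the sign-and-shift bookkeeping you flag as the main obstacle could never have closed.
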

\begin{proof}
Let $i,j\in I$ with $i\neq j$. Notice that for any $\negbeta\in \ZZ^m$ we have
\begin{equation} \label{eq:relationij}
d_i(\negbeta)-d_i(\negbeta-\negej)=d_j(\negbeta)-d_j(\negbeta-\negei).
\end{equation}
By \eqref{icoef}, we can deduce that
$$p_i(\negalpha)=\sum_{J\in \cP(I\backslash\{i\})} (-1)^{\#J} d_i(\negalpha-\textbf{1}_J).$$
Hence, as $\cP(I\backslash\{i\})=\cP(I\backslash\{i,j\})\cup \{J\cup \{j\} \ : \ J\in \cP(I\backslash\{i,j\})\}$, we get from \eqref{eq:relationij} that
$$\begin{array}{rcl}
p_i(\negalpha) & = & \displaystyle\sum_{J\in \cP(I\backslash\{i,j\})} (-1)^{\# J} (d_i(\negalpha-\textbf{1}_J)-d_i(\negalpha-\textbf{1}_J-\negej))\\
   & = & \displaystyle\sum_{J\in \cP(I\backslash\{i,j\})} (-1)^{\# J} (d_j(\negalpha-\textbf{1}_J)-d_j(\negalpha-\textbf{1}_J-\negei))\\
   & = & \ p_j(\negalpha).
\end{array}$$
\end{proof}

\begin{definition} The Poincaré series associated with $\hH(\negQ)$ is defined as the multivariate formal series $P_i(\negt)$ in $t_1,\ldots,t_m$. It  will be denoted by $P(\negt)$.
\end{definition}

Observe that, writing $P(\negt)=\sum_{\negalpha\in \ZZ^m} p(\negalpha)\negt^\negalpha$, we have an expression for the coefficients $p(\negalpha)$ of $P(\negt)$, since $P(\negt)=P_i(\negt)$ for $i\in I$. The following result connects the formal series $Q(\negt)$ and $P(\negt)$. It is the analogue of that established in \cite[Prop. 8]{DGN} in our context, and their proofs run very closely with minor adjustments that for completeness we shall indicate below.

\begin{proposition}\label{serieQP} The Poincaré series associated with $\hH(\negQ)$ satisfies the following functional equation
$$
(1-t_1\cdots t_m)\cdot P(\negt)=Q(\negt).
$$
\end{proposition}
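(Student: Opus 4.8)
The plan is to establish the functional equation by comparing the coefficients of $\negt^\negalpha$ on both sides, following closely the argument of \cite[Prop. 8]{DGN}. Write $P(\negt)=\sum_{\negalpha\in\ZZ^m}p(\negalpha)\negt^\negalpha$ and $Q(\negt)=\sum_{\negalpha\in\ZZ^m}q(\negalpha)\negt^\negalpha$. Since $1-t_1\cdots t_m$ is a Laurent polynomial, multiplying the formal series $P(\negt)$ by it is a legitimate operation in the module described in Remark~\ref{rmk:formalseries}, and the coefficient of $\negt^\negalpha$ in $(1-t_1\cdots t_m)\cdot P(\negt)$ equals $p(\negalpha)-p(\negalpha-\textbf{1})$. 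Hence the proposition is equivalent to the scalar identity
$$
p(\negalpha)-p(\negalpha-\textbf{1})=q(\negalpha)\qquad\text{for every }\negalpha\in\ZZ^m,
$$
and the whole proof reduces to checking this.

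The key step is to re-express $p(\negalpha)$ purely in terms of the dimensions $\ell$. Fix any $i\in I$; since $P(\negt)=P_i(\negt)$, the identity obtained at the start of the proof of Lemma~\ref{serieaux} gives $p(\negalpha)=\sum_{J\in\cP(I\backslash\{i\})}(-1)^{\#J}d_i(\negalpha-\textbf{1}_J)$. Substituting $d_i(\negbeta)=\ell(\negbeta)-\ell(\negbeta-\negei)$ and using $\textbf{1}_J+\negei=\textbf{1}_{J\cup\{i\}}$, I would split the result into two sums, then reindex the second one by $K:=J\cup\{i\}$, which ranges over the subsets of $I$ containing $i$ and contributes a sign $(-1)^{\#K-1}$. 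Combining it with the first sum---which ranges over the subsets of $I$ avoiding $i$---collapses everything to a single sum over $\cP(I)$:
$$
p(\negalpha)=\sum_{J\in\cP(I)}(-1)^{\#J}\ell(\negalpha-\textbf{1}_J).
$$

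Finally, I would insert $d(\negbeta)=\ell(\negbeta)-\ell(\negbeta-\textbf{1})$ into the formula \eqref{coef} for $q(\negalpha)$ and use $\negalpha-\textbf{1}_J-\textbf{1}=(\negalpha-\textbf{1})-\textbf{1}_J$ to obtain
$$
q(\negalpha)=\sum_{J\in\cP(I)}(-1)^{\#J}\ell(\negalpha-\textbf{1}_J)-\sum_{J\in\cP(I)}(-1)^{\#J}\ell\big((\negalpha-\textbf{1})-\textbf{1}_J\big).
$$
By the formula for $p$ just derived, the first sum is $p(\negalpha)$ and the second is $p(\negalpha-\textbf{1})$, so $q(\negalpha)=p(\negalpha)-p(\negalpha-\textbf{1})$, which is exactly what was needed. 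There is no serious obstacle here: the statement is a formal identity about the filtration dimensions, and the only points demanding care are keeping the signs straight in the reindexing $J\mapsto J\cup\{i\}$ and in splitting $\cP(I)$ into the subsets that do and do not contain $i$, together with the routine observation that all manipulations take place in the module of $\ZZ$-valued formal series over the ring of Laurent polynomials, so that comparing coefficients is valid.
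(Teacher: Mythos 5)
Your proof is correct. The overall strategy coincides with the paper's---both reduce the functional equation to the coefficientwise identity $p_i(\negalpha)-p_i(\negalpha-\textbf{1})=q(\negalpha)$ and both start from the simplified expression $p_i(\negalpha)=\sum_{J\in\cP(I\backslash\{i\})}(-1)^{\#J}d_i(\negalpha-\textbf{1}_J)$ obtained in the proof of Lemma~\ref{serieaux}---but the computational core is genuinely different. The paper works entirely with the quantities $d$ and $d_i$: it derives the telescoping relation $d(\negbeta)-d(\negbeta+\negei)=d_i(\negbeta-\textbf{1}+\negei)-d_i(\negbeta+\negei)$ by writing $d(\negbeta)$ as a chain of successive one-step jumps, and then massages the resulting alternating sums, splitting $\cP(I)$ into the subsets containing $i$ and those avoiding it. You instead unfold one level further, to the dimensions $\ell$ themselves, and obtain the closed formula $p(\negalpha)=\sum_{J\in\cP(I)}(-1)^{\#J}\ell(\negalpha-\textbf{1}_J)$ (your reindexing $J\mapsto J\cup\{i\}$ and the sign bookkeeping check out); after that, the identity $q(\negalpha)=p(\negalpha)-p(\negalpha-\textbf{1})$ drops out immediately from $d(\negbeta)=\ell(\negbeta)-\ell(\negbeta-\textbf{1})$ and the shift $\negalpha-\textbf{1}_J-\textbf{1}=(\negalpha-\textbf{1})-\textbf{1}_J$. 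Your route is shorter and makes transparent that both $P(\negt)$ and $Q(\negt)$ are inclusion--exclusion transforms of the single function $\negalpha\mapsto\ell(\negalpha)$; the paper's route, while more laborious, keeps the intermediate relation \eqref{eq3} between $d$ and $d_i$ explicit, which is of some independent interest. Your remark that the multiplication by the Laurent polynomial $1-t_1\cdots t_m$ is legitimate in the module of formal series is the right justification for comparing coefficients.
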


\begin{proof} According to the definition of $P(\negt)$, it is sufficient to prove that $p_i(\negalpha)-p_i(\negalpha-\textbf{1})=q(\negalpha)$ for some $i\in I$. First notice that for any reordering $\{i_1,\ldots,i_{m-1}\}$ of $I\backslash\{i\}$ and any $\negbeta\in \ZZ^m$ we can write
\begin{equation*} \label{eq1}
d(\negbeta)=d_{i_1}(\negbeta)+d_{i_2}(\negbeta-\mathbf{e}_{i_1})+\cdots+d_{i_{m-1}}(\negbeta-\mathbf{e}_{i_1}-\cdots-\mathbf{e}_{i_{m-2}})+d_i(\negbeta -\textbf{1}+\mathbf{e}_i)
\end{equation*}
and 
\begin{equation*} \label{eq2}
d(\negbeta+\mathbf{e}_i)=d_i(\negbeta+\mathbf{e}_i)+d_{i_1}(\negbeta)+d_{i_2}(\negbeta-\mathbf{e}_{i_1})+\cdots+d_{i_{m-1}}(\negbeta-\mathbf{e}_{i_1}-\cdots-\mathbf{e}_{i_{m-2}}),
\end{equation*}
from which we deduce
\begin{equation}\label{eq3}
d(\negbeta)-d(\negbeta+\mathbf{e}_i)=d_i(\negbeta-\textbf{1}+\mathbf{e}_i)-d_i(\negbeta+\mathbf{e}_i).
\end{equation}
By \eqref{icoef}, we have
$$
p_i(\negalpha)-p_i(\negalpha-\textbf{1})=(-1)^{m-1}\displaystyle\sum_{J\in \cP(I\backslash\{i\})} (-1)^{\#J} \left(d_i(\negalpha-\textbf{1}+\textbf{1}_J+\mathbf{e}_i)-d_i(\negalpha-\textbf{1}+\textbf{1}_J-\textbf{1}+\mathbf{e}_i) \right)
$$ 
and, by taking $\negbeta=\negalpha-\textbf{1}+\textbf{1}_J$ in \eqref{eq3}, we obtain
$$
p_i(\negalpha)-p_i(\negalpha-\textbf{1})=(-1)^{m}\displaystyle\sum_{J\in \cP(I\backslash\{i\})} (-1)^{\#J} \left(d(\negalpha-\textbf{1}+\textbf{1}_J)-d(\negalpha-\textbf{1}+\textbf{1}_J+\mathbf{e}_i) \right),
$$
which equals to 
$$
(-1)^{m}\displaystyle\sum_{J\in \cP(I\backslash\{i\})} (-1)^{\#J} \left(d(\negalpha-\textbf{1}_{J^c})-d(\negalpha-\textbf{1}_{J^c\backslash\{i\}}) \right),
$$
since $\textbf{1}-\textbf{1}_J=\textbf{1}_{J^c}$. As for any $J\in \cP(I\backslash\{i\})$ we have 
$$
d(\negalpha-\textbf{1}_{J^c})-d(\negalpha-\textbf{1}_{J^c\backslash\{i\}})=(-1)^m \left(d(\negalpha-\textbf{1}_{J})-d(\negalpha-\textbf{1}_{J}-\negei) \right),
$$ 
we thus obtain
$$
p_i(\negalpha)-p_i(\negalpha-\textbf{1})=\displaystyle\sum_{J\in \cP(I\backslash\{i\})} (-1)^{\#J} \left( d(\negalpha-\textbf{1}_J)-d(\negalpha-\textbf{1}_J-\mathbf{e}_i) \right),
$$
which completes the proof by noticing that $\cP(I)=\cP(I\backslash\{i\})\cup\{J\cup\{i\} \ : \ J\in \cP(I\backslash\{i\})\}$ in Eq.~\eqref{coef}.
\end{proof}

In \cite[Sec. 3.2]{Mo1}, Moyano-Fernández specializes the two point case $\negQ=(Q_1,Q_2)$ and obtains that the related Poincaré series is expressed simply as
\begin{equation}\label{Poincareseries2}
P(\negt)=\sum_{\negalpha\in \cM(\negQ)} \negt^\negalpha,
\end{equation}
where $\cM(\negQ)$ is the set of maximal elements of $\hH(\negQ)$. In Section \ref{sec4} we shall investigate the support of the Poincaré series in the general case.

\section{Generalized Weierstra\ss~ semigroups at several points}\label{sec3}

Throughout this section $\cX$ will be a curve over a finite field $\FF$ and $\negQ=(Q_1,...,Q_m)$ will be an $m$-tuple of pairwise distinct $\FF$-rational points on $\cX$. Assume that $\#\FF\geq m\geq 2$. We will provide a generating set of generalized Weierstra\ss~ semigroups at several points which extends the description of Matthews \cite{Ma1} to these objects. We furthermore generalize some results of Beelen and Tutas \cite{BT} to prove the finiteness determination of the generators.

\subsection{Generating sets for generalized Weierstra\ss~ semigroups}\label{sec3.1}

Let $m=2$, and consider the functions $\sigma_i:\ZZ\to \ZZ$ for $i=1,2$ of \cite[Sec. 3]{BT} defined by 
\begin{align*}
\sigma_2(i):=&\min\{t\in \ZZ \ : \ (i,t)\in \hH(\negQ)\}\\
\sigma_1(i):=&\min\{t\in \ZZ \ : \ (t,i)\in \hH(\negQ)\}.
\end{align*}

 It follows from \cite[Prop. 14 (i)]{BT} that
\begin{equation*}\sigma_1(\sigma_2(j))=j=\sigma_2(\sigma_1(j)) \ \text{ for every } \ j\in \ZZ,
\end{equation*}
which yields the equality
\begin{equation}\label{eqmax}
\{(j,\sigma_2(j))\in \hH(\negQ) \ : \ j\in \ZZ\}=\{(\sigma_1(j),j)\in \hH(\negQ) \ : \ j\in \ZZ\};
\end{equation}
these sets in (\ref{eqmax}) coincide exactly with the set of maximal elements $\cM(\negQ)$ of $\hH(\negQ)$ (cf. Definition \ref{defmaximals}). Furthermore, adapting ideas from \cite{K,H}, the maximal elements $\cM(\negQ)$ of $\hH(\negQ)$ provide a generating set for $\hH(\negQ)$ in the sense that
\begin{equation}\label{lub2}\hH(\negQ)=\{(\max\{\beta_1,\beta'_1\},\max\{\beta_2,\beta'_2\})\in \ZZ^2 \ : \ \negbeta,\negbeta'\in \cM(\negQ)\}.
\end{equation}

The next example elucidates the role of the absolute maximal elements in the description of a generalized Weierstra\ss~ semigroup at a pair of points.

\begin{example}\label{ex} This example illustrates the maximal elements of a generalized Weierstra\ss~ semigroup of the Hermitian curve given by the affine equation $x^4=y^3+y$ over $\mathbb{F}_9$ at pair $(Q,P)$, where $Q$ is the point \emph{at infinity} and $P=(0:0:1)$. In Figure \ref{figura1}, the maximal elements are represented by the circles ``$\circ$'', whereas the remaining elements of $\hH(Q,P)$ are represented by the filled circles ``$\bullet$''.
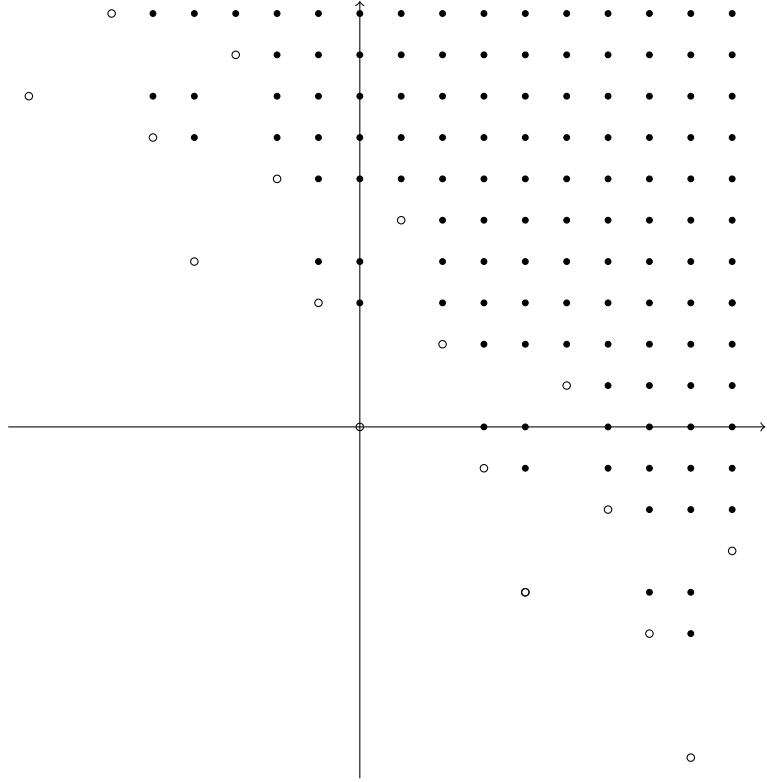
\begin{figure}[!h] 
\centering
\begin{tikzpicture}[scale=0.55] 
\draw [->] (-8.5,0) -- (9.8,0) ;
\draw [->] (0,-8.5) -- (0,10.3) ;

\draw (-8,8) circle [radius=0.09];
\draw (-6,10) circle [radius=0.09];
\draw (-5,7) circle [radius=0.09];
\draw (-4,4) circle [radius=0.09];
\draw (-3,9) circle [radius=0.09];
\draw (-2,6) circle [radius=0.09];
\draw (-1,3) circle [radius=0.09];
\draw (0,0) circle [radius=0.09];
\draw (4,-4) circle [radius=0.09];
\draw (1,5) circle [radius=0.09];
\draw (2,2) circle [radius=0.09];
\draw (3,-1) circle [radius=0.09];
\draw (4,-4) circle [radius=0.09];
\draw (5,1) circle [radius=0.09];
\draw (6,-2) circle [radius=0.09];
\draw (7,-5) circle [radius=0.09];
\draw (8,-8) circle [radius=0.09];
\draw (9,-3) circle [radius=0.09];

\draw[fill] (0,3) circle [radius=0.07];
\draw[fill] (0,4) circle [radius=0.07];
\draw[fill] (0,6) circle [radius=0.07];
\draw[fill] (0,7) circle [radius=0.07];
\draw[fill] (0,8) circle [radius=0.07];
\draw[fill] (0,9) circle [radius=0.07];
\draw[fill] (0,10) circle [radius=0.07];
\draw[fill] (3,0) circle [radius=0.07];
\draw[fill] (4,0) circle [radius=0.07];
\draw[fill] (6,0) circle [radius=0.07];
\draw[fill] (7,0) circle [radius=0.07];
\draw[fill] (8,0) circle [radius=0.07];
\draw[fill] (9,0) circle [radius=0.07];

\draw[fill] (1,6) circle [radius=0.07];
\draw[fill] (1,7) circle [radius=0.07];
\draw[fill] (1,8) circle [radius=0.07];
\draw[fill] (1,9) circle [radius=0.07];
\draw[fill] (1,10) circle [radius=0.07];

\draw[fill] (2,3) circle [radius=0.07];
\draw[fill] (2,4) circle [radius=0.07];
\draw[fill] (2,5) circle [radius=0.07];
\draw[fill] (2,6) circle [radius=0.07];
\draw[fill] (2,7) circle [radius=0.07];
\draw[fill] (2,8) circle [radius=0.07];
\draw[fill] (2,9) circle [radius=0.07];
\draw[fill] (2,10) circle [radius=0.07];

\draw[fill] (3,2) circle [radius=0.07];
\draw[fill] (3,3) circle [radius=0.07];
\draw[fill] (3,4) circle [radius=0.07];
\draw[fill] (3,5) circle [radius=0.07];
\draw[fill] (3,6) circle [radius=0.07];
\draw[fill] (3,7) circle [radius=0.07];
\draw[fill] (3,8) circle [radius=0.07];
\draw[fill] (3,9) circle [radius=0.07];
\draw[fill] (3,10) circle [radius=0.07];

\draw[fill] (4,-1) circle [radius=0.07];
\draw[fill] (4,2) circle [radius=0.07];
\draw[fill] (4,3) circle [radius=0.07];
\draw[fill] (4,4) circle [radius=0.07];
\draw[fill] (4,5) circle [radius=0.07];
\draw[fill] (4,6) circle [radius=0.07];
\draw[fill] (4,7) circle [radius=0.07];
\draw[fill] (4,8) circle [radius=0.07];
\draw[fill] (4,9) circle [radius=0.07];
\draw[fill] (4,10) circle [radius=0.07];

\draw[fill] (5,2) circle [radius=0.07];
\draw[fill] (5,3) circle [radius=0.07];
\draw[fill] (5,4) circle [radius=0.07];
\draw[fill] (5,5) circle [radius=0.07];
\draw[fill] (5,6) circle [radius=0.07];
\draw[fill] (5,7) circle [radius=0.07];
\draw[fill] (5,8) circle [radius=0.07];
\draw[fill] (5,9) circle [radius=0.07];
\draw[fill] (5,10) circle [radius=0.07];

\draw[fill] (6,-1) circle [radius=0.07];
\draw[fill] (6,1) circle [radius=0.07];
\draw[fill] (6,2) circle [radius=0.07];
\draw[fill] (6,3) circle [radius=0.07];
\draw[fill] (6,4) circle [radius=0.07];
\draw[fill] (6,5) circle [radius=0.07];
\draw[fill] (6,6) circle [radius=0.07];
\draw[fill] (6,7) circle [radius=0.07];
\draw[fill] (6,8) circle [radius=0.07];
\draw[fill] (6,9) circle [radius=0.07];
\draw[fill] (6,10) circle [radius=0.07];

\draw[fill] (7,-4) circle [radius=0.07];
\draw[fill] (7,-2) circle [radius=0.07];
\draw[fill] (7,-1) circle [radius=0.07];
\draw[fill] (7,1) circle [radius=0.07];
\draw[fill] (7,2) circle [radius=0.07];
\draw[fill] (7,3) circle [radius=0.07];
\draw[fill] (7,4) circle [radius=0.07];
\draw[fill] (7,5) circle [radius=0.07];
\draw[fill] (7,6) circle [radius=0.07];
\draw[fill] (7,7) circle [radius=0.07];
\draw[fill] (7,8) circle [radius=0.07];
\draw[fill] (7,9) circle [radius=0.07];
\draw[fill] (7,10) circle [radius=0.07];

\draw[fill] (8,-5) circle [radius=0.07];
\draw[fill] (8,-4) circle [radius=0.07];
\draw[fill] (8,-2) circle [radius=0.07];
\draw[fill] (8,-1) circle [radius=0.07];
\draw[fill] (8,1) circle [radius=0.07];
\draw[fill] (8,2) circle [radius=0.07];
\draw[fill] (8,3) circle [radius=0.07];
\draw[fill] (8,4) circle [radius=0.07];
\draw[fill] (8,5) circle [radius=0.07];
\draw[fill] (8,6) circle [radius=0.07];
\draw[fill] (8,7) circle [radius=0.07];
\draw[fill] (8,8) circle [radius=0.07];
\draw[fill] (8,9) circle [radius=0.07];
\draw[fill] (8,10) circle [radius=0.07];

\draw[fill] (9,-2) circle [radius=0.07];
\draw[fill] (9,-1) circle [radius=0.07];
\draw[fill] (9,1) circle [radius=0.07];
\draw[fill] (9,2) circle [radius=0.07];
\draw[fill] (9,3) circle [radius=0.07];
\draw[fill] (9,3) circle [radius=0.07];
\draw[fill] (9,4) circle [radius=0.07];
\draw[fill] (9,5) circle [radius=0.07];
\draw[fill] (9,6) circle [radius=0.07];
\draw[fill] (9,7) circle [radius=0.07];
\draw[fill] (9,8) circle [radius=0.07];
\draw[fill] (9,9) circle [radius=0.07];
\draw[fill] (9,10) circle [radius=0.07];

\draw[fill] (-1,10) circle [radius=0.07];
\draw[fill] (-2,10) circle [radius=0.07];
\draw[fill] (-3,10) circle [radius=0.07];
\draw[fill] (-4,10) circle [radius=0.07];
\draw[fill] (-5,10) circle [radius=0.07];

\draw[fill] (-1,9) circle [radius=0.07];
\draw[fill] (-2,9) circle [radius=0.07];

\draw[fill] (-1,8) circle [radius=0.07];
\draw[fill] (-2,8) circle [radius=0.07];
\draw[fill] (-4,8) circle [radius=0.07];
\draw[fill] (-5,8) circle [radius=0.07];

\draw[fill] (-1,7) circle [radius=0.07];
\draw[fill] (-2,7) circle [radius=0.07];
\draw[fill] (-4,7) circle [radius=0.07];

\draw[fill] (-1,6) circle [radius=0.07];

\draw[fill] (-1,4) circle [radius=0.07];
\end{tikzpicture}
\caption{Maximal elements of $\hH(Q,P)$ in the Example \ref{ex}}
\label{figura1}
\end{figure}
\end{example}

In the remaining of this subsection, we investigate the relationship between maximal elements and generating sets for the generalized Weierstra\ss~ semigroups at several points, taking account the characterization from Eq.~\eqref{lub2} in the case of two points.

The following proposition states equivalences concerning the absolute maximal property.

\begin{proposition} \label{absmax} Let $\negalpha\in \hH(\negQ)$. Then the following statements are equivalent:
\begin{enumerate}[\rm (i)] 
\item $\negalpha\in \Gamma(\negQ)$;
\item $\nabla_i^m(\negalpha)=\{\negalpha\}$ for all $i\in I$;
\item  $\nabla_i^m(\negalpha)=\{\negalpha\}$ for some $i\in I$;
\item $\ell(\negalpha)=\ell(\negalpha-\textbf{1})+1$.
\end{enumerate}
\end{proposition}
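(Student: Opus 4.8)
The plan is to prove the cycle (i) $\Rightarrow$ (ii) $\Rightarrow$ (iii) $\Rightarrow$ (iv) $\Rightarrow$ (i), working under the standing hypothesis $\#\FF \geq m$ so that Proposition \ref{prop1.1} is available. I would begin by recording the ingredients used throughout. First, since $\negalpha \in \hH(\negQ)$ there is $f \in R_\negQ$ with $\rho_\negQ(f) = \negalpha$, so $\negalpha \in \nabla_i^m(\negalpha)$ for every $i$, and $\ell(\negalpha) = \ell(\negalpha - \negei) + 1$ for every $i$ by Proposition \ref{prop1.1}(1); in particular (ii) and (iii) assert the \emph{reverse} inclusion $\nabla_i^m(\negalpha) \subseteq \{\negalpha\}$. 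Second, the sets $\nabla_i^m$ are monotone: $\nabla_i^m(\negbeta') \subseteq \nabla_i^m(\negbeta)$ whenever $\negbeta' \leq \negbeta$ and $\beta'_i = \beta_i$, directly from the definition. Third, writing $d_i(\negbeta) = \ell(\negbeta) - \ell(\negbeta - \negei) \in \{0,1\}$, one has the telescoping identity $\ell(\negalpha) - \ell(\negalpha - \textbf{1}) = \sum_{k=1}^m d_{i_k}\bigl(\negalpha - \textbf{1}_{\{i_1,\dots,i_{k-1}\}}\bigr)$ for every ordering $i_1,\dots,i_m$ of $I$; since $d_{i_1}(\negalpha) = 1$ and the sum is independent of the ordering, condition (iv) is equivalent to the vanishing of all summands with $k \geq 2$, for one (hence every) ordering. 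Finally, a direct count of Riemann--Roch dimensions using Proposition \ref{prop1.1}(1) gives $d_i(\negalpha - \negej) = d_j(\negalpha - \negei)$ for distinct $i,j$; this is \eqref{eq:relationij} specialized at $\negbeta = \negalpha$.

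For (i) $\Rightarrow$ (ii): given $\negbeta \in \nabla_i^m(\negalpha) \setminus \{\negalpha\}$, I would set $J := \{k : \beta_k = \alpha_k\}$; then $i \in J$, $J \subsetneq I$ since $\negbeta \neq \negalpha$, and $\beta_k < \alpha_k$ for $k \in I \setminus J$, so $\negbeta \in \nabla_J(\negalpha)$, contradicting $\negalpha \in \Gamma(\negQ)$. The implication (ii) $\Rightarrow$ (iii) is trivial.

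For (iii) $\Rightarrow$ (iv): from $\nabla_i^m(\negalpha) = \{\negalpha\}$ and Proposition \ref{prop1.1}(2) I would first deduce $\nabla_i^m(\negalpha - \negej) = \emptyset$, i.e. $d_i(\negalpha - \negej) = 0$, for every $j \neq i$ (otherwise the monotone inclusion $\nabla_i^m(\negalpha - \negej) \subseteq \nabla_i^m(\negalpha)$ would exhibit a point of $\nabla_i^m(\negalpha)$ with $j$-th coordinate $< \alpha_j$); then by the dimension identity above $d_j(\negalpha - \negei) = 0$, so $\nabla_j^m(\negalpha - \negei) = \emptyset$, for every $j \neq i$. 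Now apply the telescoping identity with $i_1 = i$: for $k \geq 2$ the $k$-th summand is $d_{i_k}$ evaluated at $\negalpha - \textbf{1}_{\{i,i_2,\dots,i_{k-1}\}}$, which is $\leq \negalpha - \negei$ and has $i_k$-th coordinate equal to $(\negalpha - \negei)_{i_k} = \alpha_{i_k}$; by monotonicity this element lies in $\nabla_{i_k}^m(\negalpha - \negei) = \emptyset$, so the summand vanishes, giving $\ell(\negalpha) - \ell(\negalpha - \textbf{1}) = d_i(\negalpha) = 1$.

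For (iv) $\Rightarrow$ (i): from the telescoping identity and $d_{i_1}(\negalpha) = 1$, all summands with $k \geq 2$ vanish for every ordering, so in particular $\nabla_j^m(\negalpha - \negei) = \emptyset$ for all distinct $i,j$. If $\negbeta \in \nabla_J(\negalpha)$ with $\emptyset \neq J \subsetneq I$, I pick $i_0 \in I \setminus J$ and $j_0 \in J$; then $\negbeta \leq \negalpha - \mathbf{e}_{i_0}$ and $\beta_{j_0} = (\negalpha - \mathbf{e}_{i_0})_{j_0}$, so $\negbeta \in \nabla_{j_0}^m(\negalpha - \mathbf{e}_{i_0}) = \emptyset$, a contradiction; hence $\negalpha \in \Gamma(\negQ)$. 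I expect the main obstacle to be (iii) $\Rightarrow$ (iv): the hypothesis only controls the single dimension drop governed by the $i$-th coordinate, while (iv) encodes $m-1$ further drops along a full chain from $\negalpha$ down to $\negalpha - \textbf{1}$, so bridging it requires both the symmetry $d_i(\negalpha-\negej) = d_j(\negalpha-\negei)$ and the downward propagation of the emptiness of the $\nabla_{i_k}^m$ via monotonicity. The remaining implications are routine bookkeeping with the definition of $\nabla_J$.
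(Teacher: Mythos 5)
Your proof is correct and follows essentially the same route as the paper: the same cycle of implications (i)$\Rightarrow$(ii)$\Rightarrow$(iii)$\Rightarrow$(iv)$\Rightarrow$(i), with Proposition \ref{prop1.1}(2) translating dimension drops into (non-)emptiness of the sets $\nabla_i^m$ and their monotonicity doing the rest. The only notable difference is in (iii)$\Rightarrow$(iv), where you make explicit, via the symmetry $d_i(\negalpha-\negej)=d_j(\negalpha-\negei)$ and the telescoping chain from $\negalpha$ to $\negalpha-\textbf{1}$, a sufficiency reduction that the paper states rather tersely (there in terms of the vanishing of $d_i(\negalpha-\textbf{1}_{J^c})$).
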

\begin{proof} Let us first prove that (i) implies (ii). Since $\negalpha\in \hH(\negQ)$, we get $\{\negalpha\}\subseteq \nabla_i^m(\negalpha)$ for all $i\in I$. If $\negbeta\in \nabla_j^m(\negalpha)$ with $\negbeta\neq \negalpha$ for some $j\in I$, then there exists a subset $J\subsetneq I$ containing $j$ such that $\negbeta\in \nabla_J(\negalpha)$, contradicting the hypothesis.

Since (ii) immediately implies (iii), let us assume (iii). To prove (iv) it is sufficient to show that $\ell(\negalpha-\textbf{1}_{J^c} )=\ell(\negalpha-\textbf{1}_{J^c\cup\{i\}})$ for every subset $J\subsetneq I$ containing $i$, which is equivalent to $\nabla_i^m(\negalpha-\textbf{1}_{J^c})=\emptyset$ by Proposition \ref{prop1.1}(2). But, since 
$$
\nabla_i^m(\negalpha-\textbf{1}_{J^c})\subseteq \nabla_i^m(\negalpha)=\{\negalpha\},
$$ 
the only possibility is $\nabla_i^m(\negalpha-\textbf{1}_{J^c})=\emptyset$ because $\negalpha\not\in \nabla_i^m(\negalpha-\textbf{1}_{J^c})$.

To deduce (i) from (iv), notice that $\ell(\negalpha-\negei)=\ell(\negalpha-\textbf{1})$ for every $i\in I$. If $\nabla_J(\negalpha)\neq \emptyset$ for some non-empty $J\subsetneq I$, since $\nabla_J(\negalpha)\subseteq \nabla_j^m(\negalpha-\textbf{1}_{J^c})$ for any $j\in J$, by Proposition \ref{prop1.1}(2) we get 
$$
\ell(\negalpha-\textbf{1}_{J^c})=\ell(\negalpha-\textbf{1}_{J^c}-\negej)+1.
$$
Since $J\subsetneq I$, it follows that $\ell(\negalpha-\textbf{1}_{J^c})\leq \ell(\negalpha-\negei)$ for $i\in I\backslash J$. As $\ell(\negalpha-\textbf{1}_{J^c}-\negej)\geq \ell(\negalpha-\textbf{1})$, we have
$$
\ell(\negalpha-\negej)\geq \ell(\negalpha-\textbf{1})+1,
$$ which gives us a contradiction. This completes the proof.
\end{proof}

\begin{remark}\label{rmkabsmax}
The main consequence of Proposition \ref{absmax} is that we may regard absolute maximal elements $\negalpha$ as minimal elements in the sets $$\{\negbeta\in \hH(\negQ) \ : \ \beta_i=\alpha_i\} \text{ for } i=1,\ldots,m,$$ 
with respect to the standard Bruhat (partial) order on $\ZZ^m$ given by $$(\alpha_1,\ldots,\alpha_m)\leq (\beta_1,\ldots,\beta_m)\Leftrightarrow \alpha_i\leq \beta_i \ \mbox{for} \ i=1,\ldots,m.$$

This interpretation coincides exactly with the notion used by Matthews in \cite{Ma1} in the formulation of the concept of generating sets for $H(\negQ)$ as well as in the extension of the approach of \cite{K,H} for pairs.
\end{remark} 

\bigskip

Given $\negbeta^1,\ldots,\negbeta^s\in \ZZ^m$, define their least upper bound by
$$
\mbox{lub}(\negbeta^1,\ldots,\negbeta^s):=(\max\{\beta_1^1,\ldots,\beta^s_1\},\ldots,\max\{\beta^1_m,\ldots,\beta^s_m\})\in \ZZ^m.
$$

We next prove a characterization of the generalized Weierstra\ss~ semigroups through least upper bounds of their absolute maximal elements. This description is analogous to that afforded by Matthews in \cite{Ma1} for classical Weierstra\ss~ semigroups at several points.

\begin{theorem} \label{th1} The generalized Weierstra\ss~ semigroup of $\cX$ at $\negQ=(Q_1,\ldots,Q_m)$ can be written as
$$
\hH(\negQ)=\{\mbox{lub}(\negbeta^1,\ldots,\negbeta^m) \ : \ \negbeta^1,\ldots,\negbeta^m\in \Gamma(\negQ)\}.
$$
\end{theorem}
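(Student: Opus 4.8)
The plan is to prove both inclusions of the claimed equality. For the inclusion $\{\mbox{lub}(\negbeta^1,\ldots,\negbeta^m) : \negbeta^1,\ldots,\negbeta^m\in \Gamma(\negQ)\}\subseteq \hH(\negQ)$, take absolute maximal elements $\negbeta^1,\ldots,\negbeta^m$ and set $\negalpha:=\mbox{lub}(\negbeta^1,\ldots,\negbeta^m)$, so $\alpha_i=\max_j \beta^j_i$. For each $i\in I$ pick an index $k(i)$ with $\alpha_i=\beta^{k(i)}_i$ and a function $f_i\in R_\negQ$ realizing $\negbeta^{k(i)}$, i.e. $v_{Q_j}(f_i)=-\beta^{k(i)}_j$ for all $j$; in particular $v_{Q_i}(f_i)=-\alpha_i$ and $v_{Q_j}(f_i)\geq -\alpha_j$ for all $j$. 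Then argue, exactly as in the proof of Proposition \ref{prop1.1}(1), that a generic $\FF$-linear combination $f=\sum_{i=1}^m b_i f_i$ lies in $R_\negQ$ with $v_{Q_j}(f)=-\alpha_j$ for every $j$; since $\#\FF\geq m$ such a choice of $(b_1,\ldots,b_m)$ exists outside the union of $m$ hyperplanes, whence $\negalpha\in\hH(\negQ)$.

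For the reverse inclusion $\hH(\negQ)\subseteq\{\mbox{lub}(\negbeta^1,\ldots,\negbeta^m):\negbeta^i\in\Gamma(\negQ)\}$, let $\negalpha\in\hH(\negQ)$. The idea is to produce, for each $i\in I$, an absolute maximal element $\negbeta^i$ with $\beta^i_i=\alpha_i$ and $\negbeta^i\leq\negalpha$; then automatically $\mbox{lub}(\negbeta^1,\ldots,\negbeta^m)\leq\negalpha$ and has $i$-th coordinate exactly $\alpha_i$ for each $i$, forcing $\mbox{lub}(\negbeta^1,\ldots,\negbeta^m)=\negalpha$. To build $\negbeta^i$, fix $i$ and consider the set $S_i:=\{\negbeta\in\hH(\negQ):\beta_i=\alpha_i,\ \negbeta\leq\negalpha\}$, which is nonempty (it contains $\negalpha$). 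By Remark \ref{rmkabsmax}, the absolute maximal elements with $i$-th coordinate $\alpha_i$ are precisely the $\leq$-minimal elements of $\{\negbeta\in\hH(\negQ):\beta_i=\alpha_i\}$, so it suffices to show $S_i$ has a $\leq$-minimal element that is in fact minimal in the larger set. Here I would invoke the finiteness/boundedness available in this setting: $|\negbeta|\geq 0$ for every $\negbeta\in\hH(\negQ)$ by the Corollary, and combined with $\beta_i=\alpha_i$ and $\negbeta\leq\negalpha$ this confines the remaining coordinates $\beta_j$ ($j\neq i$) to the finite range $\sum_{j\neq i}\beta_j\geq -\alpha_i$ together with $\beta_j\leq\alpha_j$; hence $S_i$ is finite and possesses a $\leq$-minimal element $\negbeta^i$. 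One then checks $\negbeta^i$ is actually minimal in all of $\{\negbeta\in\hH(\negQ):\beta_i=\alpha_i\}$: any $\negbeta\in\hH(\negQ)$ with $\beta_i=\alpha_i$ and $\negbeta\lneq\negbeta^i$ would satisfy $\negbeta\leq\negbeta^i\leq\negalpha$, hence $\negbeta\in S_i$, contradicting minimality of $\negbeta^i$ in $S_i$. Thus $\negbeta^i\in\Gamma(\negQ)$ by Proposition \ref{absmax} (equivalently Remark \ref{rmkabsmax}), and $\beta^i_i=\alpha_i$, $\negbeta^i\leq\negalpha$ as required.

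The main obstacle I anticipate is the second inclusion — specifically, the passage from ``$\leq$-minimal within $S_i$'' to ``$\leq$-minimal within the unbounded set $\{\negbeta\in\hH(\negQ):\beta_i=\alpha_i\}$'', and making sure the boundedness argument (via $|\negbeta|\geq 0$) genuinely yields finiteness of $S_i$. One must be slightly careful that decreasing a single coordinate of an element of $\hH(\negQ)$ need not stay in $\hH(\negQ)$, so the minimal element of $S_i$ has to be extracted as a minimal element of a poset, not by a naive coordinatewise descent; the finiteness of $S_i$ is exactly what guarantees such a minimal element exists. The first inclusion is essentially a repetition of the linear-algebra argument already carried out in Proposition \ref{prop1.1}(1), so it should be routine. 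A final remark worth including: the theorem in particular shows $\Gamma(\negQ)\neq\emptyset$ and that $m$ absolute maximal elements always suffice, mirroring Matthews' description in \cite{Ma1}.
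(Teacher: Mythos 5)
Your proposal is correct and follows essentially the same route as the paper: the first inclusion is the paper's generic-linear-combination argument from Proposition \ref{prop1.1}(1) using $\#\FF\geq m$, and the second inclusion extracts, for each $i$, a $\leq$-minimal element of $\nabla_i^m(\negalpha)$ (your $S_i$), identifies it as absolute maximal via Proposition \ref{absmax}, and takes the lub. The only difference is that you spell out the finiteness of $S_i$ via $|\negbeta|\geq 0$, a step the paper leaves implicit here (it appears explicitly only later, in the proof of Theorem \ref{th:dimension}).
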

\begin{proof}
To begin with, observe that $\mbox{lub}(\negbeta^1,\negbeta^2)\in \hH(\negQ)$ for $\negbeta_1,\negbeta_2\in \Gamma(\negQ)\subseteq \hH(\negQ)$, since it is always possible to find $(b_1,b_2)\in \FF^2$ outside the union of at most $m$ one-dimensional linear spaces thanks to $\#\FF\geq m$; therefore we may set $h:=b_1 f_1+b_2f_2\in R_\negQ$ such that $\rho_\negQ(h)=\mbox{lub}(\negbeta_1,\negbeta_2)$, where $f_1,f_2\in R_\negQ$ satisfy $\negbeta_1=\rho_\negQ(f_1)$ and $\negbeta_2=\rho_\negQ(f_2)$. This argument may be extended to show that $\mbox{lub}(\negbeta^1,\ldots,\negbeta^m)\in \hH(\negQ)$ for  $\negbeta^1,\ldots,\negbeta^m\in \Gamma(\negQ)$. On the other hand, suppose that $\negalpha\in \hH(\negQ)\backslash \Gamma(\negQ)$. By Proposition \ref{absmax}(iii), it follows that $\nabla_i^m(\negalpha)\supsetneq \{\negalpha\}$  for all $i\in I$, and therefore for each $i\in I$ there exists $\negbeta^i\in \nabla_i^m(\negalpha)$ with $\negbeta^i\neq \negalpha$ such that $\nabla_i^m(\negbeta^i)=\{\negbeta^i\}$. Hence, we can write $\negalpha$ as $\mbox{lub}(\negbeta^1,\ldots,\negbeta^m)$, which concludes the proof.
\end{proof}

As a consequence, the generalized Weierstra\ss~ semigroups are entirely determined by their absolute maximal elements. In what follows, we present the outcomes of this property which seem---at first glance---to justify why this characterization is appropriated to the study of these objects. To be precise, we analyze the relationship between the Riemann-Roch spaces of divisors supported on subsets of $\{Q_1,\ldots,Q_m\}$ and the absolute maximal elements of generalized Weierstra\ss~ semigroups at $\negQ$.\\ 

Given $\negalpha\in \ZZ^m$, let 
\begin{equation*}\label{abssetRR}
\Gamma(\negalpha):=\{\negbeta\in \Gamma(\negQ) \ : \ \negbeta\leq \negalpha\}.
\end{equation*}
For $i\in I$, define on $\Gamma(\negalpha)$ the relation
\begin{equation*}\label{ieqrel}
\negbeta \equiv_i \negbeta' \ \text{if and only if } \ \beta_i=\beta_i'.
\end{equation*}
Notice that $\equiv_i$ is an equivalence relation on $\Gamma(\negalpha)$. Denote by $\Gamma(\negalpha)/\equiv_i$ the set of equivalence classes $[\negbeta]_i$ for $\negbeta\in \Gamma(\negalpha)$. In our next theorem, we formulate a characterization of the dimensions $\ell(\negalpha)$ in terms of absolute maximal elements in $\Gamma(\negalpha)$.

\begin{theorem}\label{th:dimension} 
Let $\negalpha\in \ZZ^m$, then
$$
\ell(\negalpha)=\#(\Gamma(\negalpha)/\equiv_i).
$$
In particular, $\#(\Gamma(\negalpha)/\equiv_i)$ does not depend on $i$.
\end{theorem}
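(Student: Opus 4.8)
The plan is to set up a bijection between a basis of $\cL(\negalpha)$ and the equivalence classes $\Gamma(\negalpha)/\equiv_i$, thereby proving $\ell(\negalpha)=\#(\Gamma(\negalpha)/\equiv_i)$. Fix $i\in I$. The natural first step is to understand $\ell(\negalpha)$ as the number of distinct ``$i$-th pole-order values'' realized inside $\cL(\negalpha)$. More precisely, for $f\in\cL(\negalpha)\setminus\{0\}$ consider $-v_{Q_i}(f)$; since the nonzero values of a function space under a single valuation form a staircase, one has $\ell(\negalpha)=\#\{-v_{Q_i}(f):f\in\cL(\negalpha)\setminus\{0\}\}$ (this is the standard fact that $\ell(\negalpha)=\ell(\negalpha-\negei)+1$ exactly when a new $i$-th pole order appears, combined with $\ell(\negalpha-N\negei)=0$ for $N\gg0$). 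So it suffices to biject the set $V_i(\negalpha):=\{-v_{Q_i}(f):f\in\cL(\negalpha)\setminus\{0\}\}$ with $\Gamma(\negalpha)/\equiv_i$.

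Next I would build the map $\Gamma(\negalpha)/\equiv_i \to V_i(\negalpha)$ by $[\negbeta]_i\mapsto \beta_i$; this is well defined by the definition of $\equiv_i$, and lands in $V_i(\negalpha)$ because $\negbeta\in\Gamma(\negQ)$ with $\negbeta\le\negalpha$ gives an $f\in R_\negQ$ with $\rho_\negQ(f)=\negbeta$, hence $f\in\cL(\negalpha)$ and $-v_{Q_i}(f)=\beta_i$. For injectivity: if $\negbeta,\negbeta'\in\Gamma(\negalpha)$ have $\beta_i=\beta_i'$, I want $\negbeta=\negbeta'$. This is exactly where the absolute maximal property (Proposition~\ref{absmax}, via Remark~\ref{rmkabsmax}) enters: both $\negbeta$ and $\negbeta'$ lie in $\nabla_i^m(\mathrm{lub}(\negbeta,\negbeta'))$ — indeed $\mathrm{lub}(\negbeta,\negbeta')\in\hH(\negQ)$ and has the same $i$-th coordinate — and by Proposition~\ref{absmax}(ii) applied to $\negbeta$ (or $\negbeta'$) one forces $\mathrm{lub}(\negbeta,\negbeta')=\negbeta=\negbeta'$; concretely, $\negbeta\le\mathrm{lub}(\negbeta,\negbeta')$, both are in $\hH(\negQ)$ with equal $i$-th coordinate, so $\mathrm{lub}(\negbeta,\negbeta')\in\nabla_i^m(\negbeta)=\{\negbeta\}$, and likewise $=\{\negbeta'\}$.

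For surjectivity, take $n\in V_i(\negalpha)$, so there is $f\in\cL(\negalpha)$ with $-v_{Q_i}(f)=n$. Among all $g\in R_\negQ$ with $-v_{Q_i}(g)=n$ and $g\le\negalpha$ coordinatewise (meaning $\rho_\negQ(g)\le\negalpha$), I would choose one whose remaining coordinates are componentwise minimal; the key point is that such a pointwise-minimal element exists and is itself absolute maximal, by the minimality characterization of Remark~\ref{rmkabsmax} (an element of $\hH(\negQ)$ that is minimal in $\{\negbeta\in\hH(\negQ):\beta_i=n\}$ is absolute maximal by Proposition~\ref{absmax}). The only subtlety is producing a single minimal element rather than merely minimal ones coordinate-by-coordinate: given $g_1$ minimizing the $j$-th coordinate and $g_2$ minimizing the $k$-th, the element of $\hH(\negQ)$ built from a generic combination $b_1g_1+b_2g_2$ (using $\#\FF\ge m$, as in the proof of Theorem~\ref{th1}) has $i$-th coordinate still $n$ — since both summands have $i$-th pole order $n$ and we can choose the combination to keep it — while its $j$-th and $k$-th coordinates are $\le$ those of $g_1,g_2$ respectively; iterating yields a genuine minimum $\negbeta$, which then satisfies $\negbeta\le\negalpha$, $\negbeta\in\Gamma(\negQ)$, $\beta_i=n$, so $[\negbeta]_i$ maps to $n$. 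Finally, since $\ell(\negalpha)$ is manifestly independent of $i$, so is $\#(\Gamma(\negalpha)/\equiv_i)$, giving the last assertion for free.

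The main obstacle I anticipate is the surjectivity step — specifically, showing that the coordinatewise minimum over $\{g\in R_\negQ:\rho_\negQ(g)\le\negalpha,\ -v_{Q_i}(g)=n\}$ is actually attained by a single element of $\hH(\negQ)$ (and hence is absolute maximal). This requires the linear-algebra/genericity argument with $\#\FF\ge m$ to ``merge'' functions realizing the various minimal coordinates without disturbing the $i$-th one, essentially a refinement of the $\mathrm{lub}$ construction in Theorem~\ref{th1}; injectivity, by contrast, is a clean direct consequence of Proposition~\ref{absmax}(ii).
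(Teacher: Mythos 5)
Your overall strategy is the same as the paper's: reduce $\ell(\negalpha)$ to the number of distinct $i$-th pole orders realized in $\cL(\negalpha)$ (the paper does this via the filtration $\cL(\negalpha)\supseteq\cL(\negalpha-\negei)\supseteq\cdots$ together with Proposition \ref{prop1.1}(2)), and then match each such value with an absolute maximal element $\leq\negalpha$ via Proposition \ref{absmax} and the minimality interpretation of Remark \ref{rmkabsmax}. However, two of your intermediate steps do not work as written. The injectivity argument is both unnecessary and incorrect: the map $[\negbeta]_i\mapsto\beta_i$ is injective on $\equiv_i$-classes by the very definition of $\equiv_i$, so you do not need each class to be a singleton; moreover the containment you invoke, $\mathrm{lub}(\negbeta,\negbeta')\in\nabla_i^m(\negbeta)$, is false in general, since elements of $\nabla_i^m(\negbeta)$ must be $\leq\negbeta$ in every coordinate $j\neq i$, whereas the least upper bound is $\geq\negbeta$.

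The more serious issue is in surjectivity. A generic combination $b_1g_1+b_2g_2$ satisfies $\rho_\negQ(b_1g_1+b_2g_2)=\mathrm{lub}(\rho_\negQ(g_1),\rho_\negQ(g_2))$, i.e.\ the coordinatewise \emph{maximum}; its $j$-th and $k$-th coordinates are therefore $\geq$, not $\leq$, those of $g_1$ and $g_2$, so iterating this ``merge'' moves away from a minimum rather than towards one. Fortunately the obstacle you flag is not actually present: you do not need a coordinatewise minimum, only a minimal element. The set $\nabla_i^m(\negalpha-j\negei)$ (with $j=\alpha_i-n$) is nonempty and finite, being bounded above by $\negalpha-j\negei$ and bounded below because $|\negbeta|\geq 0$ for every $\negbeta\in\hH(\negQ)$; hence it contains a minimal element $\negbeta$ for $\leq$. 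Any $\neggamma\in\hH(\negQ)$ with $\gamma_i=\beta_i$ and $\neggamma\leq\negbeta$ again lies in $\nabla_i^m(\negalpha-j\negei)$, so $\nabla_i^m(\negbeta)=\{\negbeta\}$ and Proposition \ref{absmax}(iii) shows $\negbeta$ is absolute maximal with $\beta_i=n$ and $\negbeta\leq\negalpha$. With these two repairs your argument coincides with the paper's proof.
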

\begin{proof}
We first observe that the conditions imposed on $\negbeta\in \Gamma(\negQ)$ by $\negbeta\leq \negalpha$ and $|\negbeta|\geq 0$ imply that $\Gamma(\negalpha)$ is finite. Note also that the dimension $\ell(\negalpha)$ is precisely the number of proper inclusions in the filtration
$$
\cL(\negalpha)\supseteq \cL(\negalpha-\negei)\supseteq \cL(\negalpha-2\negei)\supseteq \cdots \supseteq \cL(\negalpha-j\negei)\supseteq \cdots \supseteq\{0\}.
$$
By Proposition \ref{prop1.1}(2), $\ell(\negalpha-j\negei)\neq \ell(\negalpha-(j-1)\negei)$ if and only if $\nabla_i^m(\negalpha-j\negei)\neq \emptyset$. It is equivalent to the existence of an absolute maximal element $\negbeta\in \hH(\negQ)$ with $\beta_i=\alpha_i-j$ and $\negbeta\leq \negalpha-j\negei\leq \negalpha$, which, according to Remark \ref{rmkabsmax}, is a minimal element with respect to $\leq$ in the set $\{\neggamma\in \hH(\negQ) \ : \ \gamma_i=\alpha_i-j\}$.
\end{proof}

Write $\Gamma(\negalpha)/\equiv_i \ =\{[\negbeta^1]_i,\ldots,[\negbeta^{\ell(\negalpha)}]_i\}$ for a choice of representative classes, and denote, by convenient abuse of notation, $\rho_\negQ^{-1}(\Gamma(\negalpha)/\equiv_i)=\{f_1,\ldots,f_{\ell(\negalpha)}\}$, where $f_j\in \rho^{-1}_\negQ(\negbeta^j)$ for $j=1,\ldots,\ell(\negalpha).$
Refining the connection established above, we next describe how the absolute maximal elements of $\hH(\negQ)$ carry intrinsic information on $R_\negQ$ as a $\FF$-vector space. 

\begin{corollary}\label{cor:span} 
Let $\negalpha\in\ZZ^m$. The set $\rho_\negQ^{-1}(\Gamma(\negalpha)/\equiv_i)$ is a basis for the Riemann-Roch space $\cL(\negalpha)$. In particular, the ring of functions of $\cX$ that are regular outside $Q_1,\ldots,Q_m$ is spanned by $\rho_\negQ^{-1}(\Gamma(\negQ))$ as an infinite-dimensional $\FF$-vector space.
\end{corollary}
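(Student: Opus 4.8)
The plan is to deduce the Corollary directly from Theorem~\ref{th:dimension} together with a linear-independence argument exploiting the valuations $v_{Q_i}$. First I would fix $\negalpha \in \ZZ^m$ and $i \in I$, and write $\rho_\negQ^{-1}(\Gamma(\negalpha)/\equiv_i)=\{f_1,\ldots,f_{\ell(\negalpha)}\}$, where $f_j \in \rho_\negQ^{-1}(\negbeta^j)$ and the $\negbeta^j \in \Gamma(\negalpha)$ are chosen as representatives of the distinct classes modulo $\equiv_i$. Since each $\negbeta^j \leq \negalpha$ and $\negbeta^j \in \hH(\negQ)$, we have $v_{Q_k}(f_j) = -\beta^j_k \geq -\alpha_k$ for all $k$, so $f_j \in \cL(\negalpha)$; hence the span of this set is contained in $\cL(\negalpha)$. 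As Theorem~\ref{th:dimension} gives $\#\rho_\negQ^{-1}(\Gamma(\negalpha)/\equiv_i) = \#(\Gamma(\negalpha)/\equiv_i) = \ell(\negalpha) = \dim_\FF \cL(\negalpha)$, it suffices to prove that $f_1,\ldots,f_{\ell(\negalpha)}$ are $\FF$-linearly independent.

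For the linear independence, the key observation is that the values $\beta^1_i,\ldots,\beta^{\ell(\negalpha)}_i$ are pairwise distinct by the very definition of the relation $\equiv_i$; equivalently, the pole orders $v_{Q_i}(f_j) = -\beta^j_i$ are pairwise distinct. I would then invoke the standard fact that rational functions with pairwise distinct valuations at a fixed place are linearly independent over $\FF$: if $\sum_{j} c_j f_j = 0$ with not all $c_j$ zero, let $j_0$ be an index among those with $c_{j_0}\neq 0$ for which $\beta^{j_0}_i$ is largest; then $v_{Q_i}\big(\sum_j c_j f_j\big) = -\beta^{j_0}_i$ by the ultrametric inequality (the minimum of the valuations $v_{Q_i}(c_j f_j)$ over $c_j \neq 0$ is attained uniquely), contradicting that the sum is $0$. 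Therefore $\{f_1,\ldots,f_{\ell(\negalpha)}\}$ is a linearly independent subset of $\cL(\negalpha)$ of cardinality $\ell(\negalpha) = \dim_\FF \cL(\negalpha)$, hence a basis.

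For the second assertion, recall from Section~\ref{sec2.2} that $R_\negQ = \bigcup_{\negalpha \in \ZZ^m} \cL(\negalpha)$. Every $f \in R_\negQ \setminus \{0\}$ lies in some $\cL(\negalpha)$ and is thus an $\FF$-linear combination of elements of $\rho_\negQ^{-1}(\Gamma(\negalpha)/\equiv_i) \subseteq \rho_\negQ^{-1}(\Gamma(\negQ))$; hence $\rho_\negQ^{-1}(\Gamma(\negQ))$ spans $R_\negQ$ over $\FF$. That $R_\negQ$ is infinite-dimensional follows since $\ell(\negalpha) \to \infty$ as $|\negalpha| \to \infty$ (e.g. by Riemann--Roch, $\ell(\negalpha) \geq |\negalpha| + 1 - g$). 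There is no serious obstacle here: the only point requiring care is making precise the "distinct valuations imply independence" lemma and checking that the representatives genuinely have distinct $i$-th coordinates, both of which are immediate from the definitions; the substantive content is entirely carried by Theorem~\ref{th:dimension}.
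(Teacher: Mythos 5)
Your proposal is correct and follows essentially the same route as the paper: containment of the representatives in $\cL(\negalpha)$, linear independence via the fact that the valuations $v_{Q_i}(f_j)$ are pairwise distinct by the definition of $\equiv_i$ (so the minimum valuation in a vanishing linear combination cannot be attained twice), and then counting against $\ell(\negalpha)=\#(\Gamma(\negalpha)/\equiv_i)$ from Theorem~\ref{th:dimension}. The only difference is that you spell out the ultrametric argument and the infinite-dimensionality of $R_\negQ$ in slightly more detail than the paper does.
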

\begin{proof} Observe that $\rho_\negQ^{-1}(\Gamma(\negalpha))\subseteq \cL(\negalpha)$, and so it remains to prove that $\rho_\negQ^{-1}(\Gamma(\negalpha)/\equiv_i)$ is a linearly independent subset of $\cL(\negalpha)$. Indeed, assuming $\ell(\negalpha)>1$, if $\sum_{j=1}^{\ell(\negalpha)}b_j f_j=0$ for $b_j\in \FF$ not all zero, then $\min\{v_{Q_i}(f_1),\ldots,v_{Q_i}(f_{\ell(\negalpha)})\}$ is attained at least two times, which gives a contradiction by the definition of $\equiv_i$. Therefore, $\rho_\negQ^{-1}(\Gamma(\negalpha)/\equiv_i)$ is a basis of $\cL(\negalpha)$. The latter assertion follows by noticing that $R_\negQ=\bigcup_{\negalpha\in \ZZ^m} \cL(\negalpha)$.
\end{proof}

\subsection{Determining generating sets}\label{sec3.2}

Although the generalized Weierstra\ss~ semigroups have the description as in Theorem \ref{th1}, the computation of all absolute maximal elements is not an easy task since the set formed by them is infinite. In this way, we would like to know whether the set $\Gamma(\negQ)$ of absolute maximal elements can be finitely determined. For this purpose, let us start this discussion with the case $m=2$. Here the characterization of elements of $\cM(\negQ)$ as in \eqref{eqmax} is essential since by \cite[Prop. 14 (vii)]{BT}, the function $\sigma_2$  has the periodical property 
\begin{equation} \label{function2}
\sigma_2(j+a)=\sigma_2(j)-a \quad \text{for } j\in \ZZ,
\end{equation}
where $a$ is the smallest positive integer $t$ such that $(t,-t)\in \hH(\negQ)$. Hence, it is possible to determine $\cM(\negQ)$ from the knowledge of both $a$ and the finite set 
\begin{equation}\label{function1}
\{(j,\sigma_2(j)) \ : \ 0\leq j< a\}.
\end{equation}

For the general case we can consider a similar approach: for $i=1,\ldots,m-1$, let $a_i$ be the smallest positive integer $t$ such that $t(Q_i-Q_{i+1})$ is a principal divisor on $\cX$, and denote by $\negeta^i\in \ZZ^m$ the $m$-tuple whose $j$-th coordinate is  
\begin{equation}\label{etas}
\eta^i_j=\left \lbrace \begin{array}{rcl}
a_i ,&  & \mbox{if } j=i;\\
-a_i ,&  & \mbox{if } j=i+1;\\
0 ,&  & \mbox{otherwise.}\\
\end{array}\right.
\end{equation}
Notice that the existence of these $a_i$'s is guaranteed by the finiteness of the divisor class group. Consider the region 
$$\cC:=\{\negalpha\in \ZZ^m \ : \ 0\leq \alpha_i< a_i \ \mbox{for } i=1,\ldots,m-1\}.$$

As noticed in \cite[Sec. 2]{BT}, the functions $f\in R_\negQ^\times$ are such that the support of $\divv(f)$ is a subset of the set $\{Q_1,\ldots,Q_m\}$ and their image under $\rho_\negQ$ form a lattice in the hyperplane $\{\negalpha\in \RR^m \ : \ |\negalpha|=0\}$. Denote by $\Theta(\negQ)$ its sublattice generated by the elements $\negeta^1,\ldots,\negeta^{m-1}$. \\

Similarly to the properties \eqref{function2} and \eqref{function1} satisfied by pairs, we can establish the following general statement on maximal and absolute maximal elements in generalized Weierstra\ss~ semigroups.

\begin{theorem} \label{finitedetermine} 
The maximal elements of $\hH(\negQ)$ are finitely determined by the maximal elements in $\cC$ modulo the lattice $\Theta(\negQ)$; namely,
$$\cM(\negQ)=(\cM(\negQ)\cap \cC)+\Theta(\negQ).$$
Furthermore, this property also holds for absolute maximal elements of $\hH(\negQ)$, that is,
$$\Gamma(\negQ)=(\Gamma(\negQ)\cap \cC)+\Theta(\negQ).$$
\end{theorem}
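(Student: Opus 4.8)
The plan is to first reduce the statement to an invariance property of $\hH(\negQ)$ under translation by the lattice $\Theta(\negQ)$, and then to check that this invariance is compatible with the operators $\nabla_i^m$ and $\nabla_J$ that define maximality and absolute maximality. The key observation is that for each $i$, the $m$-tuple $\negeta^i$ satisfies $\rho_\negQ(g_i)=\negeta^i$ for some $g_i\in R_\negQ^\times$ (indeed $g_i$ is a generator of the principal divisor $a_i(Q_i-Q_{i+1})$), so multiplication by $g_i$ and $g_i^{-1}$ gives a bijection $R_\negQ\backslash\{0\}\to R_\negQ\backslash\{0\}$ realizing $\rho_\negQ(fg_i)=\rho_\negQ(f)+\negeta^i$. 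Consequently $\hH(\negQ)+\negeta^i=\hH(\negQ)$ for every $i$, hence $\hH(\negQ)+\Theta(\negQ)=\hH(\negQ)$; equivalently $\ell(\negalpha+\negtheta)=\ell(\negalpha)$ and more precisely $\cL(\negalpha+\negtheta)=g_{\negtheta}\cdot\cL(\negalpha)$ for $\negtheta\in\Theta(\negQ)$, where $g_\negtheta$ is the corresponding product of the $g_i^{\pm1}$.

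Next I would translate this into the statement that the relevant $\nabla$-sets are $\Theta(\negQ)$-equivariant. Since $\negbeta\in\nabla_i^m(\negalpha)$ is characterized purely by membership in $\hH(\negQ)$ together with the (in)equalities $\beta_i=\alpha_i$, $\beta_j\le\alpha_j$, and translation by $\negtheta\in\Theta(\negQ)$ preserves $\hH(\negQ)$ and shifts all coordinates of $\negalpha$ and $\negbeta$ by the same amount, we get $\nabla_i^m(\negalpha+\negtheta)=\nabla_i^m(\negalpha)+\negtheta$; the same argument applies to $\nabla_J(\negalpha)$ and hence to $\nabla(\negalpha)$. Therefore $\negalpha\in\cM(\negQ)$ iff $\negalpha+\negtheta\in\cM(\negQ)$, and likewise for $\Gamma(\negQ)$. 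This already gives the inclusions $(\cM(\negQ)\cap\cC)+\Theta(\negQ)\subseteq\cM(\negQ)$ and $(\Gamma(\negQ)\cap\cC)+\Theta(\negQ)\subseteq\Gamma(\negQ)$.

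For the reverse inclusions I would use that $\cC$ is a fundamental domain for $\Theta(\negQ)$ acting on the hyperplane slices. Given any $\negalpha\in\cM(\negQ)$, I want to produce $\negtheta\in\Theta(\negQ)$ with $\negalpha-\negtheta\in\cC$, i.e. with $0\le\alpha_i-\theta_i<a_i$ for $i=1,\dots,m-1$. Since $\negeta^i$ has $i$-th coordinate $a_i$, $(i+1)$-th coordinate $-a_i$, and zeros elsewhere, subtracting an appropriate integer multiple of $\negeta^1$ first brings the first coordinate into $[0,a_1)$, then subtracting a multiple of $\negeta^2$ fixes the second coordinate into $[0,a_2)$ without disturbing the first (because $\negeta^2$ has zero first coordinate), and so on inductively through $i=m-1$; this exhibits the required $\negtheta$, so $\negalpha-\negtheta\in\cM(\negQ)\cap\cC$ by the equivariance just proved, and $\negalpha\in(\cM(\negQ)\cap\cC)+\Theta(\negQ)$. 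The identical reduction works for $\Gamma(\negQ)$.

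The main obstacle I anticipate is not conceptual but a matter of care: one must verify that $\Theta(\negQ)$ really is the full image $\rho_\negQ(R_\negQ^\times)$ — or at least that $\cC$ meets every coset of $\Theta(\negQ)$ in $\ZZ^m$ in exactly one point along each level set $|\negalpha|=\mathrm{const}$ — and that the inductive "triangular" reduction into $\cC$ terminates and is unique; the triangular structure of the $\negeta^i$ (each $\negeta^i$ supported on coordinates $i,i+1$) is exactly what makes this work, mirroring the one-variable periodicity \eqref{function2} used in the case $m=2$. One should also double-check the edge case of the last coordinate $\alpha_m$, which is unconstrained in $\cC$ and is simply determined by the others once we are on a fixed level set, consistently with $|\negeta^i|=0$.
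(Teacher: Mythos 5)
Your proposal is correct and follows essentially the same route as the paper: both rest on the $\Theta(\negQ)$-equivariance of the sets $\nabla_J$ and $\nabla_i^m$ (which the paper derives from $\pm\negeta\in\hH(\negQ)$ and you derive, equivalently, from multiplication by the functions $g_i$ with $\rho_\negQ(g_i)=\negeta^i$), combined with the observation that every element can be translated into $\cC$ by a lattice vector. Your explicit triangular reduction into $\cC$ spells out a step the paper merely asserts, but the argument is the same.
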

\begin{proof}
Notice first that if $\negalpha\in \cM(\negQ)\cap \cC$ and $\negeta\in \Theta(\negQ)$, then $\negalpha+\negeta\in \cM(\negQ)$ since otherwise $\neggamma\in \nabla(\negalpha+\negeta)$ implies $\neggamma-\negeta\in \nabla(\negalpha)$, contradicting the maximality of $\negalpha$. On the other hand, given $\negalpha\in \cM(\negQ)$, there exists $\negeta\in \Theta(\negQ)$ such that $\negalpha-\negeta\in \cC$. We claim that $\negalpha-\negeta\in \cM(\negQ)$. Indeed, $\negalpha-\negeta$ belongs to $\hH(\negQ)$ by the semigroup property. If $\neggamma\in \nabla(\negalpha-\negeta)$, then $\neggamma+\negeta \in \nabla(\negalpha)$, contrary to $\negalpha$ being maximal.

Now, suppose that $\negalpha\in \Gamma(\negQ)$. Since $\negalpha$ is maximal, there exists $\negeta\in \Theta(\negQ)$ such that $\negalpha-\negeta\in \cM(\negQ)\cap \cC$. It remains to show that $\negalpha-\negeta\in \Gamma(\negQ)$. On the contrary, from Proposition \ref{absmax}, we would have $\neggamma\in \nabla_i^m(\negalpha-\negeta)$ for some $i\in I$, with $\neggamma\neq \negalpha-\negeta$, which means that $\neggamma+\negeta\in \nabla_i^m(\negalpha)$, contradicting the absolute maximality of $\negalpha$, because $\neggamma+\negeta\neq \negalpha$. On the other hand, given $\negalpha\in \Gamma(\negQ)\cap \cC$, it follows from Proposition \ref{absmax} that $\negalpha+\negeta^j$ is absolute maximal for $j=1,\ldots,m-1$. As each element $\negeta$ in $\Theta(\negQ)$ is an integral linear combination of $\negeta^1,\ldots,\negeta^{m-1}$, we have $\negalpha+\negeta\in \Gamma(\negQ)$. 
\end{proof}

We close this section by exploring the consequences of Theorem \ref{finitedetermine} for the Riemann-Roch spaces associated with divisors whose support is contained in $\{Q_1,\ldots,Q_m\}$. To this end, let us define for $\negalpha, \negalpha'\in \ZZ^m$ the relation
\begin{equation} \label{eqrel}
\negalpha\equiv \negalpha' \ \text{if and only if} \ \negalpha-\negalpha'\in \Theta(\negQ).
\end{equation}
Observe that the foregoing relation is an equivalence relation in $\ZZ^m$ because $\Theta(\negQ)$ is a lattice in $\{\negalpha\in \RR^m \ : \ |\negalpha|=0\}$. Writing $[\negalpha]$ for the equivalence class of $\negalpha$ for $\equiv$, we can state the following property of dimensions in these equivalence classes.

\begin{corollary}\label{dimclass}
Let $\negalpha\in \ZZ^m$, then $\ell(\negalpha')=\ell(\negalpha)$ for any $\negalpha'\in [\negalpha]$.
\end{corollary}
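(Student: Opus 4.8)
The plan is to derive Corollary \ref{dimclass} directly from Theorem \ref{th:dimension} together with the lattice-invariance of the set of absolute maximal elements established in Theorem \ref{finitedetermine}. By definition of the equivalence relation \eqref{eqrel}, it suffices to treat the case $\negalpha'=\negalpha+\negeta$ for a single lattice vector $\negeta\in \Theta(\negQ)$, since a general $\negalpha'\in[\negalpha]$ differs from $\negalpha$ by such a vector and equality of dimensions will then follow. Fixing an index $i\in I$, Theorem \ref{th:dimension} gives $\ell(\negalpha)=\#(\Gamma(\negalpha)/\equiv_i)$ and $\ell(\negalpha+\negeta)=\#(\Gamma(\negalpha+\negeta)/\equiv_i)$, so the whole statement reduces to exhibiting a bijection between the two quotient sets $\Gamma(\negalpha)/\equiv_i$ and $\Gamma(\negalpha+\negeta)/\equiv_i$.

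The natural candidate for this bijection is translation by $\negeta$. First I would check that $\negbeta\mapsto \negbeta+\negeta$ maps $\Gamma(\negalpha)$ into $\Gamma(\negalpha+\negeta)$: if $\negbeta\in\Gamma(\negQ)$ with $\negbeta\leq\negalpha$, then $\negbeta+\negeta\in\Gamma(\negQ)$ by the identity $\Gamma(\negQ)=(\Gamma(\negQ)\cap\cC)+\Theta(\negQ)$ of Theorem \ref{finitedetermine} (any lattice translate of an absolute maximal element is absolute maximal), and $\negbeta+\negeta\leq\negalpha+\negeta$ is immediate. The inverse map is translation by $-\negeta$, which works for the same reason since $-\negeta\in\Theta(\negQ)$ as well; so translation is a bijection $\Gamma(\negalpha)\to\Gamma(\negalpha+\negeta)$. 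Next I would verify that this bijection is compatible with the equivalence relations $\equiv_i$: because the $i$-th coordinate of $\negeta$ is a fixed constant (namely $\eta_i$), we have $\beta_i=\beta_i'$ if and only if $(\beta_i+\eta_i)=(\beta_i'+\eta_i)$, so $\negbeta\equiv_i\negbeta'$ precisely when $\negbeta+\negeta\equiv_i\negbeta'+\negeta$. Hence translation descends to a well-defined bijection on the quotients $\Gamma(\negalpha)/\equiv_i \;\longrightarrow\; \Gamma(\negalpha+\negeta)/\equiv_i$, yielding $\#(\Gamma(\negalpha)/\equiv_i)=\#(\Gamma(\negalpha+\negeta)/\equiv_i)$ and therefore $\ell(\negalpha)=\ell(\negalpha+\negeta)$.

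Iterating over the finitely many generators $\negeta^1,\dots,\negeta^{m-1}$ of $\Theta(\negQ)$ — or, more cleanly, applying the single-vector argument to the arbitrary $\negeta=\negalpha-\negalpha'\in\Theta(\negQ)$ — gives $\ell(\negalpha')=\ell(\negalpha)$ for every $\negalpha'\in[\negalpha]$, which is the claim. I do not anticipate a genuine obstacle here: the only point requiring care is to confirm that translation by $\negeta$ respects the order relation $\leq$ used in defining $\Gamma(\negalpha)$ and that the $i$-th coordinate shift is constant so that $\equiv_i$ is preserved; both are elementary. An alternative, even shorter route would bypass $\Gamma(\negalpha)$ entirely: exhibit an $\FF$-isomorphism $\cL(\negalpha)\cong\cL(\negalpha+\negeta)$ given by multiplication by the rational function $f_\negeta\in R_\negQ^\times$ with $\rho_\negQ(f_\negeta)=\negeta$ (equivalently $\divv(f_\negeta)=\sum \eta_j Q_j$), since multiplication by $f_\negeta$ shifts all the relevant valuations by exactly $\negeta$ and hence carries $\cL(\negalpha)$ bijectively onto $\cL(\negalpha+\negeta)$; I would mention this as the conceptual reason behind the coordinate bookkeeping.
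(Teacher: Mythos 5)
Your main argument is correct and is essentially the paper's own proof: both use Theorem \ref{th:dimension} to reduce the claim to counting $\Gamma(\cdot)/\equiv_i$, and both observe that translation by $\negeta\in\Theta(\negQ)$ gives $\Gamma(\negalpha+\negeta)=\negeta+\Gamma(\negalpha)$ compatibly with $\equiv_i$. (Only a cosmetic slip in your closing aside: with the convention $\rho_\negQ(f)=(-v_{Q_1}(f),\ldots,-v_{Q_m}(f))$, the function with $\rho_\negQ(f_\negeta)=\negeta$ has $\divv(f_\negeta)=-\sum_j\eta_jQ_j$, not $+\sum_j\eta_jQ_j$.)
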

\begin{proof} Note that $\negbeta\in \Gamma(\negQ)$ if and only if $\negbeta+\negeta\in \Gamma(\negQ)$ for any $\negeta\in \Theta(\negQ)$. Hence  $\negbeta\in \Gamma(\negalpha)$ is equivalent to $\negbeta+\negeta\in\Gamma(\negalpha+\negeta)$ for $\negeta\in \Theta(\negQ)$, and thus $\Gamma(\negalpha+\negeta)=\negeta+\Gamma(\negalpha)$. Therefore $$\#(\Gamma(\negalpha+\negeta)/\equiv_i)=\#((\negeta+\Gamma(\negalpha))/\equiv_i)=\#(\Gamma(\negalpha)/\equiv_i),$$
which completes the proof by invoking Theorem \ref{th:dimension}.
\end{proof} 

Since $\Gamma(\negQ)\cap \cC$ is finite, let $\negbeta^1,\ldots,\negbeta^c$ be its elements. For $i=1,\ldots,c$, let $f_i\in R_\negQ$ be a function such that $\rho_\negQ(f_i)=\negbeta^i$. Analogously, let $g_i\in R_\negQ$ be a function such that $\rho_\negQ(g_i)=\negeta^i$ for $i=1,\ldots,m-1$. We can thus state the following concerning such functions.

\begin{corollary}
The ring $R_\negQ$ of functions of $\cX$ that are regular outside $Q_1,\ldots,Q_m$ is spanned as an $\FF$-vector space by the set of functions
$$
\{f_i \cdot g_1^{i_1}\cdots g_{m-1}^{i_{m-1}} \ : \ 0\leq i\leq c \ \ \mbox{and}\ \ i_j\in \ZZ \ \ \mbox{for}\ \ j=1,\ldots,m-1\}.
$$
\end{corollary}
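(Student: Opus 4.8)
The plan is to combine Corollary~\ref{cor:span}, which identifies $\rho_\negQ^{-1}(\Gamma(\negQ))$ as a spanning set of $R_\negQ$ over $\FF$, with the finiteness statement of Theorem~\ref{finitedetermine}, which says $\Gamma(\negQ)=(\Gamma(\negQ)\cap\cC)+\Theta(\negQ)$. So every absolute maximal element is of the form $\negbeta^i+\negeta$ for some $i\in\{1,\ldots,c\}$ and some $\negeta\in\Theta(\negQ)$, and every $\negeta\in\Theta(\negQ)$ is an integral combination $i_1\negeta^1+\cdots+i_{m-1}\negeta^{m-1}$ with $i_j\in\ZZ$. The function-theoretic translation is that a function realizing $\negbeta^i+\negeta$ under $\rho_\negQ$ can be taken to be $f_i\cdot g_1^{i_1}\cdots g_{m-1}^{i_{m-1}}$, because $\rho_\negQ$ is additive on products: $\rho_\negQ(fg)=\rho_\negQ(f)+\rho_\negQ(g)$ since $v_{Q_j}(fg)=v_{Q_j}(f)+v_{Q_j}(g)$.

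Concretely, I would proceed as follows. First, fix an arbitrary absolute maximal element $\negalpha\in\Gamma(\negQ)$. By Theorem~\ref{finitedetermine} write $\negalpha=\negbeta^i+\negeta$ with $\negbeta^i\in\Gamma(\negQ)\cap\cC$ and $\negeta\in\Theta(\negQ)$, and expand $\negeta=\sum_{j=1}^{m-1}i_j\negeta^j$ with $i_j\in\ZZ$. Second, form the function $h:=f_i\cdot g_1^{i_1}\cdots g_{m-1}^{i_{m-1}}$, which lies in $R_\negQ^\times\subseteq R_\negQ$ because each $f_i$ and each $g_j^{\pm1}$ is regular outside $\{Q_1,\ldots,Q_m\}$ (the $g_j$ are units in $R_\negQ$ as their divisors are supported on $\{Q_1,\ldots,Q_m\}$ and have zero total degree on the complement, hence no zeros or poles away from the $Q_i$). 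Third, compute $\rho_\negQ(h)=\rho_\negQ(f_i)+\sum_j i_j\rho_\negQ(g_j)=\negbeta^i+\sum_j i_j\negeta^j=\negalpha$ using additivity of the valuations. Thus for every $\negalpha\in\Gamma(\negQ)$ there is an element $h$ of the displayed set with $\rho_\negQ(h)=\negalpha$, i.e.\ the displayed set maps onto $\Gamma(\negQ)$ under $\rho_\negQ$. Fourth, invoke Corollary~\ref{cor:span}: since $\rho_\negQ^{-1}(\Gamma(\negQ))$ spans $R_\negQ$ over $\FF$, and each of its elements can be chosen from the displayed set (or differs from such an element only by whatever freedom is already absorbed in Corollary~\ref{cor:span}'s choice of preimages), the displayed set spans $R_\negQ$.

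The one subtlety worth spelling out is the matching of preimages: Corollary~\ref{cor:span} produces \emph{some} basis $\{f_1',\ldots,f_{\ell(\negalpha')}'\}$ of each $\cL(\negalpha')$ with $f_k'\in\rho_\negQ^{-1}(\negbeta)$ for the relevant maximal elements $\negbeta$, but it does not prescribe which preimage. Since $\rho_\negQ^{-1}(\negbeta)$ for a fixed $\negbeta$ is an $\FF^\times$-torsor up to lower-order terms, and the spanning property only requires \emph{one} function per value $\negbeta\in\Gamma(\negQ)$, it is enough to know that for each $\negbeta\in\Gamma(\negQ)$ the displayed set contains at least one function $h$ with $\rho_\negQ(h)=\negbeta$; then $R_\negQ=\sum_{\negbeta\in\Gamma(\negQ)}\FF\cdot h_\negbeta$ for such chosen $h_\negbeta$, and replacing the basis functions of Corollary~\ref{cor:span} by these $h_\negbeta$ changes nothing about the span since any two functions with the same image under $\rho_\negQ$ generate, together with functions of strictly smaller valuation vector, the same space — a routine triangularity argument on the filtration $\cL(\negalpha)\supseteq\cL(\negalpha-\negei)\supseteq\cdots$. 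I expect this bookkeeping about preimages to be the only genuinely delicate point; everything else is immediate from the additivity of $\rho_\negQ$ and the two cited results. I would therefore keep the proof short: cite Theorem~\ref{finitedetermine} and Corollary~\ref{cor:span}, observe $\rho_\negQ(f_i g_1^{i_1}\cdots g_{m-1}^{i_{m-1}})=\negbeta^i+\sum_j i_j\negeta^j$, and conclude.
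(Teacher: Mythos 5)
Your proposal is correct and follows essentially the same route as the paper, which simply cites Corollary~\ref{cor:span} together with the decomposition $\Gamma(\negQ)=(\Gamma(\negQ)\cap\cC)+\Theta(\negQ)$ from Theorem~\ref{finitedetermine}; the additivity $\rho_\negQ(fg)=\rho_\negQ(f)+\rho_\negQ(g)$ and the observation that the choice of preimage of each absolute maximal element is immaterial are exactly the details the paper leaves implicit, and you supply them correctly.
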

\begin{proof}
According to Corollary \ref{cor:span}, $\rho_\negQ^{-1}(\Gamma(\negQ))$ spans $R_\negQ$ as a $\FF$-vector space. Since $\Gamma(\negQ)=(\Gamma(\negQ)\cap \cC)+\Theta(\negQ)$ by Theorem \ref{finitedetermine}, we have the assertion.
\end{proof}

\section{Poincaré series and their functional equations} \label{sec4}

This section is devoted to the study of the support of Poincaré series associated with generalized Weierstra\ss~ semigroups and their functional equations. In particular, we extend some results of \cite{Mo1}. As in the previous section, let $\cX$ be a curve over a finite field $\FF$ and let $\negQ=(Q_1,\ldots,Q_m)$ be an $m$-tuple of pairwise distinct $\FF$-rational points on $\cX$. We also assume that $\#\FF\geq m\geq 2$.

\subsection{Poincaré series as an invariant} \label{sec4.1}
Eq.~\eqref{icoef} provides an expression for the coefficients of $P(\negt)$. In the proposition to be proved, we use that expression to state a characterization of likely elements in the support of $P(\negt)$, extending somehow the formula \eqref{Poincareseries2} to the case of several points. This result is the version of \cite[Prop. 3.8]{Mo2} for our Poincaré series associated with generalized Weierstra\ss~ semigroups; we shall point its proof out for the sake of clarity.

\begin{proposition}\label{prop3} 
Let $P(\negt)=\sum_{\negalpha\in \ZZ^\ell} p(\negalpha)\negt^\negalpha$ be the corresponding Poincaré series of $\hH(\negQ)$, then the following statements hold:
\begin{enumerate}[\rm(a)]
\item if $\negalpha\not\in \hH(\negQ)$ then $p(\negalpha)=0$;
\item if $\negalpha\in \hH(\negQ)\backslash \cM(\negQ)$ then $p(\negalpha)=0$;
\item if $\negalpha\in \Gamma(\negQ)$ then $p(\negalpha)=1$.
\end{enumerate}
\end{proposition}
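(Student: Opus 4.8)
The strategy is to work directly from the coefficient formula \eqref{icoef} for $p(\negalpha)=p_i(\negalpha)$, choosing $i$ conveniently, and to exploit Proposition~\ref{prop1.1}(2) to recognise which of the quantities $d_i(\negbeta)=\ell(\negbeta)-\ell(\negbeta-\negei)$ vanish. Recall from the proof of Lemma~\ref{serieaux} that the formula simplifies to
$$
p_i(\negalpha)=\sum_{J\in \cP(I\backslash\{i\})}(-1)^{\#J}\, d_i(\negalpha-\textbf{1}_J),
$$
so everything reduces to understanding the values $d_i(\negalpha-\textbf{1}_J)$ for $J\subseteq I\setminus\{i\}$, i.e. at the points $\negbeta$ obtained from $\negalpha$ by subtracting $1$ in some subset of coordinates not equal to $i$. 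By Proposition~\ref{prop1.1}(2), $d_i(\negbeta)=0$ precisely when $\nabla_i^m(\negbeta)=\emptyset$, and $d_i(\negbeta)\in\{0,1\}$ always (since the filtration steps by Riemann--Roch spaces differing by at most one dimension), so each summand is $0$ or $\pm 1$ and the whole computation is a bookkeeping of which subsets $J$ contribute.

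For part (a): if $\negalpha\notin\hH(\negQ)$, I would argue that $d_i(\negalpha-\textbf{1}_J)=0$ for \emph{every} $J\in\cP(I\setminus\{i\})$. Indeed $\nabla_i^m(\negalpha-\textbf{1}_J)\subseteq\nabla_i^m(\negalpha)$; and if $\negalpha\notin\hH(\negQ)$ then by Proposition~\ref{prop1.1}(1) there is some index $k$ with $\ell(\negalpha)=\ell(\negalpha-\nege_k)$ — one should pick $i$ to be such an index $k$, so that $\nabla_i^m(\negalpha)=\emptyset$ by Proposition~\ref{prop1.1}(2), hence $\nabla_i^m(\negalpha-\textbf{1}_J)=\emptyset$ for all $J$ and every term vanishes, giving $p(\negalpha)=0$. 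For part (b): if $\negalpha\in\hH(\negQ)\setminus\cM(\negQ)$, then $\nabla(\negalpha)\neq\emptyset$, so $\nabla_k(\negalpha)\neq\emptyset$ for some $k$; again choose $i=k$. Now the terms pair up: group the subsets $J\subseteq I\setminus\{i\}$ into pairs $\{J, J\triangle\{\ell\}\}$ for a suitable fixed coordinate $\ell$ along which there is an element of $\nabla_k(\negalpha)$ (so that $\negalpha-\nege_\ell$ already lies below an element of $\hH(\negQ)$ with $i$-th coordinate $\alpha_i$), and show $d_i(\negalpha-\textbf{1}_J)=d_i(\negalpha-\textbf{1}_{J\cup\{\ell\}})$ for each such pair, so the signed sum collapses to zero. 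The precise choice of $\ell$ and the verification that the two $d_i$-values agree (using $\nabla_i^m(\negalpha-\textbf{1}_J)$ versus $\nabla_i^m(\negalpha-\textbf{1}_J-\nege_\ell)$ and the existence of the $\nabla_k(\negalpha)$-element) is the technical heart here.

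For part (c): if $\negalpha\in\Gamma(\negQ)$, then by Proposition~\ref{absmax} we have $\nabla_i^m(\negalpha)=\{\negalpha\}$ for every $i$, equivalently $\ell(\negalpha)=\ell(\negalpha-\textbf{1})+1$. I would fix any $i$ and claim that in the sum $\sum_{J}(-1)^{\#J}d_i(\negalpha-\textbf{1}_J)$ the only nonzero term is $J=\emptyset$, which contributes $d_i(\negalpha)=1$. For $J\neq\emptyset$ one has $\negalpha-\textbf{1}_J\lneq\negalpha$ with some coordinate $j\neq i$ strictly decreased; then $\nabla_i^m(\negalpha-\textbf{1}_J)\subseteq\nabla_i^m(\negalpha)=\{\negalpha\}$, but $\negalpha\notin\nabla_i^m(\negalpha-\textbf{1}_J)$ because its $j$-th coordinate $\alpha_j$ exceeds $\alpha_j-1$; hence $\nabla_i^m(\negalpha-\textbf{1}_J)=\emptyset$ and $d_i(\negalpha-\textbf{1}_J)=0$ by Proposition~\ref{prop1.1}(2). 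Thus $p(\negalpha)=d_i(\negalpha)=1$.

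\textbf{Main obstacle.} The delicate point is part (b): unlike (a) and (c), where one shows \emph{all but one} term vanishes, here nonzero terms can genuinely occur and must cancel in pairs. The difficulty is to make the pairing canonical — to pin down, from the mere nonemptiness of some $\nabla_k(\negalpha)$, a single coordinate $\ell$ along which the involution $J\mapsto J\triangle\{\ell\}$ on $\cP(I\setminus\{i\})$ is sign-reversing and value-preserving for $d_i$. I expect this to require choosing an element $\negbeta\in\nabla_k(\negalpha)$ and letting $\ell$ be a coordinate where $\beta_\ell<\alpha_\ell$, then checking that subtracting $1$ in coordinate $\ell$ does not change whether an absolute-maximal-type element with $i$-th coordinate $\alpha_i$ sits below $\negalpha-\textbf{1}_J$; this is essentially the several-point analogue of the cancellation argument in \cite[Prop. 3.8]{Mo2}, and transporting it faithfully to the $\hH(\negQ)$ setting using only Proposition~\ref{prop1.1}(2) is where the real care is needed.
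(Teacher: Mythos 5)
Your parts (a) and (c) are correct and essentially coincide with the paper's argument: in both cases one shows that all but at most one of the summands $d_i(\negalpha-\textbf{1}_J)$ vanish, via the containments $\nabla_i^m(\negalpha-\textbf{1}_J)\subseteq\nabla_i^m(\negalpha)$ and Proposition \ref{prop1.1}(2), after choosing $i$ suitably in (a).

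The genuine gap is in part (b), and you have flagged it yourself as the ``technical heart''; as written it is not a proof. The resolution is much simpler than the sign-reversing involution you propose, and no distinguished coordinate $\ell$ is needed. Choose $i$ with $\nabla_i(\negalpha)\neq\emptyset$ and fix a single witness $\negbeta\in\nabla_i(\negalpha)$; then $\beta_i=\alpha_i$ and $\beta_j\le\alpha_j-1$ for every $j\neq i$, so $\negbeta\le\negalpha-\textbf{1}_J$ coordinatewise with equality in the $i$-th coordinate for \emph{every} $J\in\cP(I\setminus\{i\})$, i.e.\ $\negbeta\in\nabla_i^m(\negalpha-\textbf{1}_J)$ for all such $J$. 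Hence by Proposition \ref{prop1.1}(2) every one of the $2^{m-1}$ summands equals $1$, and
$$
p_i(\negalpha)=\sum_{J\in\cP(I\setminus\{i\})}(-1)^{\#J}=\sum_{j=0}^{m-1}(-1)^j{m-1\choose j}=(1-1)^{m-1}=0.
$$
This is exactly what the paper does: its chain \eqref{eqdim} records the monotonicity $0\le d_i(\negalpha-\textbf{1}+\negei)\le\cdots\le d_i(\negalpha)\le 1$, and the witness in $\nabla_i(\negalpha)$ forces the bottom term $d_i(\negalpha-\textbf{1}+\negei)=d_i(\negalpha-\textbf{1}_{I\setminus\{i\}})$ to be $1$, whence all terms are $1$. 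So the cancellation in (b) is not a pairwise cancellation among terms of varying values but the binomial identity applied to a constant sequence; your involution $J\mapsto J\triangle\{\ell\}$ is value-preserving only because all values are already equal, so it buys nothing and cannot be established independently of the observation above. Without the remark that one element of $\nabla_i(\negalpha)$ simultaneously witnesses nonemptiness of $\nabla_i^m(\negalpha-\textbf{1}_J)$ for all $J$, part (b) remains open in your write-up.
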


\begin{proof} 
By Proposition \ref{prop1.1} (2) we have $d_i(\negbeta)=0$ if and only if $\nabla_i^m(\negbeta)=\emptyset$; thus for any $i, j\in I$ with $i\neq j$ and $\negbeta\in \ZZ^m$, we get
$$
d_i(\negbeta+\mathbf{e}_i+\mathbf{e}_j)\geq d_i(\negbeta+\mathbf{e}_i),
$$
since $\nabla_i^m(\negbeta+\mathbf{e}_i+\mathbf{e}_j)\supseteq \nabla_i^m(\negbeta+\mathbf{e}_i)$. Hence, for any $i\in I$ and any reordering $\{i_1,\ldots,i_{m-1}\}$ of $I\backslash\{i\}$, we have
\begin{equation}\label{eqdim}
0\leq d_i(\negalpha-\textbf{1}+\mathbf{e}_i)\leq d_i(\negalpha-\textbf{1}+\mathbf{e}_i+\mathbf{e}_{i_1})\leq \cdots\leq d_i(\negalpha)\leq 1.
\end{equation}
To prove (a), we observe that $d_i(\negalpha)=0$ for some $i\in I$ by Proposition \ref{prop1.1}. Then, by \eqref{eqdim} we deduce that each sum $d_i(\negalpha-\textbf{1}+\textbf{1}_J+\negei)$ of the coefficients $p_i(\negalpha)$ in \eqref{icoef}
vanishes and therefore $p_i(\negalpha)=0$, which gives (a) by Proposition \ref{serieQP}. Now, assuming $\negalpha\in \hH(\negQ)\backslash \cM(\negQ)$, then $\nabla_i(\negalpha)\neq \emptyset$ for some $i\in I$, which implies $d_i(\negalpha-\textbf{1}+\mathbf{e}_i)=1$ by the equality $\nabla_i(\negalpha)=\nabla_i^m(\negalpha-\textbf{1}+\mathbf{e}_i)$ together with Proposition \ref{prop1.1} (2). Therefore, all other following terms in the inequalities \eqref{eqdim} are equal to 1 and so
\begin{dgroup} 
\begin{dmath}\label{auxP1}
p_i(\negalpha)  =  {(-1)^{m-1}\displaystyle\sum_{j=0}^{m-1} (-1)^j\displaystyle\sum_{J\in \cP(I\backslash\{i\}) \atop \#J=j} d_i(\negalpha-\textbf{1}+\textbf{1}_J+\negei)}\end{dmath}
\begin{dmath}\label{auxP2}
		= {(-1)^{m-1}\displaystyle\sum_{j=0}^{m-1} (-1)^j {m-1 \choose j},}
\end{dmath}
\end{dgroup}
which yields $p_i(\negalpha)=0$, since the sum in \eqref{auxP2} is 0. For $\negalpha\in \Gamma(\negQ)$, Proposition \ref{prop1.1} implies that $d_i(\negalpha)=1$ since $\negalpha\in \hH(\negQ)$, and $d_i(\negalpha-\textbf{1}+\textbf{1}_J+\negei)=0$ for all $J\subsetneq I\backslash\{i\}$ because $d(\negalpha)=1$ by Proposition \ref{absmax}. The proof concludes by observing that from \eqref{auxP1} we obtain $p_i(\negalpha)=(-1)^{m-1}(-1)^{m-1}=1$.  
\end{proof}

Therefore, from the items (a) and (b) in Proposition \ref{prop3}, we can rewrite the Poincaré series $P(\negt)$ associated with $\hH(\negQ)$ as 
\begin{equation}\label{maxeq}
P(\negt)=\sum_{\negalpha\in \cM(\negQ)}p(\negalpha)\negt^\negalpha.
\end{equation}
Notice that in this form, $P(\negt)$ agrees exactly with that afforded by \eqref{Poincareseries2} for the two point case. Furthermore, by item (c), the absolute maximal elements of $\hH(\negQ)$ do appear in the support of $P(\negt)$.

We next show a general expression for the Poincaré series which allows us to describe them from a certain Laurent polynomial and the generators of $\Theta(\negQ)$.

\begin{theorem} \label{serfindet}
Let $P^*(\negt)$ be the multivariate Laurent polynomial 
\begin{equation} \label{laurentpolyn} P^*(\negt):=\sum_{\negalpha\in \cM(\negQ)\cap \cC} p(\negalpha)\negt^\negalpha.
\end{equation} 
Then the Poincaré series $P(\negt)$ associated with $\hH(\negQ)$ satisfies 
\begin{equation*} P(\negt)=\left(\sum_{\negeta\in \Theta(\negQ)}\negt^\negeta\right)\cdot P^*(\negt).\end{equation*}
\end{theorem}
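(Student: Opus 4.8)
The plan is to combine the description of the Poincaré series in terms of maximal elements, Eq.~\eqref{maxeq}, with the finite-determination statement of Theorem~\ref{finitedetermine} applied to $\cM(\negQ)$. Concretely, Theorem~\ref{finitedetermine} tells us that $\cM(\negQ)=(\cM(\negQ)\cap \cC)+\Theta(\negQ)$, and moreover (this is the point needing care) that this representation is \emph{unique}: each $\negalpha\in\cM(\negQ)$ decomposes uniquely as $\negalpha=\negbeta+\negeta$ with $\negbeta\in\cM(\negQ)\cap\cC$ and $\negeta\in\Theta(\negQ)$. Uniqueness follows because $\cC$ is a fundamental domain for the action of $\Theta(\negQ)$ on the hyperplane-direction lattice: the generators $\negeta^1,\ldots,\negeta^{m-1}$ have $\negeta^i_i=a_i$, $\negeta^i_{i+1}=-a_i$, and $\negeta^i_j=0$ otherwise, so adding an element of $\Theta(\negQ)$ changes the first $m-1$ coordinates of $\negalpha$ by an integer vector whose $i$-th entry lies in $a_i\ZZ + \sum_{k<i}(\ast) a_k$; one checks by a triangular (successive-reduction) argument on coordinates $1,2,\ldots,m-1$ that there is exactly one lattice translate landing in the box $0\le\alpha_i<a_i$.

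First I would record this unique-decomposition fact, then observe that the coefficient $p(\negalpha)$ is translation-invariant along $\Theta(\negQ)$: for $\negbeta\in\cM(\negQ)\cap\cC$ and $\negeta\in\Theta(\negQ)$ we have $p(\negbeta+\negeta)=p(\negbeta)$. This is where I would invoke Corollary~\ref{dimclass}, which gives $\ell(\negalpha')=\ell(\negalpha)$ whenever $\negalpha'\equiv\negalpha$; since the $d_i$'s and hence the coefficients $p(\negalpha)$ in Eq.~\eqref{icoef} are built entirely out of differences of dimensions $\ell(\cdot)$ evaluated at points that are all translated by the same $\negeta\in\Theta(\negQ)$, the whole expression for $p_i(\negbeta+\negeta)$ matches that for $p_i(\negbeta)$ term by term. (Alternatively, one notes $\cM(\negalpha)$ is $\Theta(\negQ)$-equivariant exactly as in the proof of Corollary~\ref{dimclass}, and $p$ is a function of the local data near the maximal points only.)

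With these two ingredients the computation is immediate. Starting from Eq.~\eqref{maxeq} and partitioning $\cM(\negQ)$ according to its unique representative in $\cC$,
\begin{align*}
P(\negt)=\sum_{\negalpha\in\cM(\negQ)}p(\negalpha)\negt^{\negalpha}
=\sum_{\negbeta\in\cM(\negQ)\cap\cC}\ \sum_{\negeta\in\Theta(\negQ)}p(\negbeta+\negeta)\,\negt^{\negbeta+\negeta}
=\sum_{\negbeta\in\cM(\negQ)\cap\cC}p(\negbeta)\,\negt^{\negbeta}\sum_{\negeta\in\Theta(\negQ)}\negt^{\negeta},
\end{align*}
which is precisely $P^*(\negt)\cdot\bigl(\sum_{\negeta\in\Theta(\negQ)}\negt^{\negeta}\bigr)$. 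The rearrangement of the double sum is legitimate at the level of formal series (Remark~\ref{rmk:formalseries}) because, for each fixed $\negalpha\in\ZZ^m$, the coefficient of $\negt^{\negalpha}$ on the right receives a contribution from at most one pair $(\negbeta,\negeta)$, by uniqueness of the decomposition.

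The main obstacle is really just the uniqueness of the decomposition $\negalpha=\negbeta+\negeta$ — i.e.\ that $\cC$ meets each $\Theta(\negQ)$-orbit exactly once and not merely at least once; Theorem~\ref{finitedetermine} as stated only gives the set equality, so I would need to spell out the triangular reduction showing $\cC$ is a genuine fundamental domain. Everything else is a formal-series bookkeeping exercise plus the translation-invariance of $p$, which is inherited from Corollary~\ref{dimclass}.
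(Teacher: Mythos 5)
Your proof is correct and follows essentially the same route as the paper: rewrite $P(\negt)$ via Eq.~\eqref{maxeq}, decompose $\cM(\negQ)$ using Theorem~\ref{finitedetermine}, and check that $p$ is invariant under translation by $\Theta(\negQ)$ (the paper deduces this directly from $\negeta,-\negeta\in\hH(\negQ)$ via the sets $\nabla_i^m$ rather than from Corollary~\ref{dimclass}, but the content is the same). Your insistence on the uniqueness of the decomposition $\negalpha=\negbeta+\negeta$ is a legitimate point that the paper leaves implicit, and your triangular reduction on the coordinates $1,\ldots,m-1$ settles it correctly.
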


\begin{proof}
After writing $P(\negt)$ as in \eqref{maxeq}, the proof follows by Theorem \ref{finitedetermine} provided that we show the equality $p(\negalpha)=p(\negalpha+\negeta)$ for any $\negalpha\in \cM(\negQ)\cap \cC$ and $\negeta\in \Theta(\negQ)$. Since the coefficient $p(\negalpha)$ is as in \eqref{icoef} for any $i\in I$, it is enough to prove that $d_i(\negalpha-\textbf{1}+\textbf{1}_J+\negei)=d_i(\negalpha+\negeta-\textbf{1}+\textbf{1}_J+\negei)$ for any $J\in \cP(I\backslash\{i\})$, which is equivalent to show that $\nabla_i^m(\negalpha)=\emptyset$ if and only if $\nabla_i^m(\negalpha+\negeta)=\emptyset$. But this holds by the fact that $\negeta,-\negeta\in \hH(\negQ)$.
\end{proof}

The Laurent polynomial $P^*(\negt)$ in \eqref{laurentpolyn} satisfying the functional equation of Theorem \ref{serfindet} is called the \emph{semigroup polynomial} associated with $\hH(\negQ)$.\\

We now conclude the current section by proving a result which summarizes the main properties of Poincaré series associated with a Weierstra\ss~ semigroup.

\begin{theorem}\label{th:main} The Poincaré series associated with a generalized Weierstra\ss~ semigroup at several rational points of $\cX$ determines completely the whole semigroup. Furthermore, it is determined by its semigroup polynomial.
\end{theorem}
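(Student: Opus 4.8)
The statement packages two claims: that the Poincar\'e series $P(\negt)$ determines $\hH(\negQ)$, and that $P(\negt)$ is itself recovered from the semigroup polynomial $P^*(\negt)$. The second claim requires no new work: it is exactly the functional equation of Theorem~\ref{serfindet}, which exhibits $P(\negt)=\bigl(\sum_{\negeta\in\Theta(\negQ)}\negt^{\negeta}\bigr)\cdot P^*(\negt)$, where $\Theta(\negQ)$ is the lattice generated by the finitely many tuples $\negeta^1,\dots,\negeta^{m-1}$. So the real content is the first claim, and the plan is to give a finite, purely formal recipe that reads off the integer $\ell(\negalpha)$ for \emph{every} $\negalpha\in\ZZ^m$ from the coefficients of $P(\negt)$; once all the $\ell(\negalpha)$ are in hand, Proposition~\ref{prop1.1}(1) identifies $\hH(\negQ)$ as the set of $\negalpha$ with $\ell(\negalpha)=\ell(\negalpha-\negei)+1$ for all $i\in I$.

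The recipe proceeds in three moves. First, pass from $P(\negt)$ to $Q(\negt)$: by Proposition~\ref{serieQP} one has $Q(\negt)=(1-t_1\cdots t_m)\cdot P(\negt)$, and multiplying a formal series by the Laurent polynomial $1-t_1\cdots t_m$ is a legitimate operation in the module structure recalled in Remark~\ref{rmk:formalseries}, so the coefficients $q(\negalpha)$ are explicitly computed from the $p(\negalpha)$.

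Second --- and this is the crux --- recover $L(\negt)$, equivalently all the numbers $d(\negalpha)=\dim(\cL(\negalpha)/\cL(\negalpha-\textbf{1}))$, from $Q(\negt)=\prod_{i=1}^m(1-t_i)\cdot L(\negt)$; in other words, divide $Q(\negt)$ by $\prod_{i}(1-t_i)$. Care is needed here, because that division is \emph{not} a valid operation on an arbitrary formal series; it succeeds only because the support of $L(\negt)$ is bounded below for the grading $\negalpha\mapsto|\negalpha|$. Concretely, Eq.~\eqref{coef} gives $q(\negalpha)=\sum_{J\in\cP(I)}(-1)^{\#J}d(\negalpha-\textbf{1}_J)$, so
$$
d(\negalpha)=q(\negalpha)-\sum_{\emptyset\neq J\subseteq I}(-1)^{\#J}\,d(\negalpha-\textbf{1}_J),
$$
and each $\negalpha-\textbf{1}_J$ with $J\neq\emptyset$ has $|\negalpha-\textbf{1}_J|<|\negalpha|$. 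Since $\cL(\negalpha)=\cL(\negalpha-\textbf{1})=0$ whenever $|\negalpha|<0$ (a divisor of negative degree has no nonzero global sections), one has $d(\negalpha)=0$ for $|\negalpha|<0$; hence an induction on $|\negalpha|$ determines every $d(\negalpha)$ from the $q(\negalpha)$. In particular $L\mapsto\prod_i(1-t_i)L$ is injective on formal series with this support property, so $Q(\negt)$ pins down $L(\negt)$.

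Third, assemble the dimensions: since $d(\negalpha)=\ell(\negalpha)-\ell(\negalpha-\textbf{1})$, one has $\ell(\negalpha)=\sum_{k\ge 0}d(\negalpha-k\textbf{1})$, a finite sum because the summand vanishes as soon as $|\negalpha|-km<0$. Feeding these dimensions into Proposition~\ref{prop1.1}(1) yields $\hH(\negQ)$, which proves the first claim; the second is Theorem~\ref{serfindet}, as noted. I expect the only genuinely delicate point to be the middle move --- namely, showing that multiplication by $\prod_i(1-t_i)$ is injective on the class of formal series whose support is bounded below with respect to $|\cdot|$, so that $L(\negt)$ is recovered unambiguously from $Q(\negt)$. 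This rests squarely on the negative-degree vanishing of Riemann--Roch spaces, which grades the support of $L(\negt)$ from below; everything else is bookkeeping with results already at hand.
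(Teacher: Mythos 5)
Your proposal is correct, but it takes a genuinely different route from the paper. The paper's proof is purely qualitative: it combines Theorem~\ref{th1} (every element of $\hH(\negQ)$ is a least upper bound of $m$ absolute maximal elements) with Proposition~\ref{prop3}, which sandwiches the support of $P(\negt)$ between $\Gamma(\negQ)$ and $\cM(\negQ)$; since least upper bounds of any subset of $\hH(\negQ)$ containing $\Gamma(\negQ)$ already exhaust the semigroup, reading off the support of $P(\negt)$ suffices, and the second claim is quoted from Theorem~\ref{serfindet} exactly as you do. You instead invert the chain of definitions $P\rightsquigarrow Q\rightsquigarrow L\rightsquigarrow \ell\rightsquigarrow\hH(\negQ)$: multiplication by $1-t_1\cdots t_m$ recovers $Q(\negt)$ via Proposition~\ref{serieQP}, the identity $q(\negalpha)=\sum_{J\in\cP(I)}(-1)^{\#J}d(\negalpha-\textbf{1}_J)$ is solved for $d(\negalpha)$ by induction on $|\negalpha|$ using the negative-degree vanishing $d(\negalpha)=0$ for $|\negalpha|<0$, the telescoping sum $\ell(\negalpha)=\sum_{k\ge0}d(\negalpha-k\textbf{1})$ rebuilds all Riemann--Roch dimensions, and Proposition~\ref{prop1.1}(1) (which, like the paper's route, needs $\#\FF\ge m$) then identifies the semigroup. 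You correctly isolate the one delicate point, namely that division by $\prod_i(1-t_i)$ is only legitimate on series whose support is bounded below for the grading $|\cdot|$, and your induction handles it. The trade-off: the paper's argument is shorter given Theorems~\ref{th1} and Proposition~\ref{prop3} and explains \emph{which} coefficients of $P(\negt)$ matter (the maximal elements), whereas your argument bypasses those two results entirely and proves something strictly stronger --- the Poincaré series determines not just $\hH(\negQ)$ but the entire dimension function $\negalpha\mapsto\ell(\negalpha)$ of the filtration.
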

\begin{proof} 
According to Theorem \ref{th1}, the absolute maximal elements in $\hH(\negQ)$ determine entirely the semigroup via least upper bounds. Since Proposition \ref{prop3} (c) asserts that the coefficients of absolute maximal elements in the Poincaré series are non-zero, the Poincaré series carries sufficient information to determine the whole semigroup. Moreover, Proposition \ref{serfindet} states that the semigroup polynomial $P^*(\negt)$ determines finitely the Poincaré series $P(\negt)$ through the functional equation of Theorem \ref{serfindet}. This concludes the proof.
\end{proof}

\subsection{Symmetry and functional equations} \label{sec4.2}
In \cite[Sec. 4]{Mo1}, Moyano-Fernández described a functional equation for the Poincaré series of generalized Weierstra\ss~ semigroups satisfying a condition of symmetry. Here we introduce an equivalent notion of symmetry to that in \cite{Mo1} which will enable us to extend the results there.

\begin{definition}\label{defsymm}
 We say that $\hH(\negQ)$ is \emph{symmetric} if there exists an element $\neggamma\not\in \hH(\negQ)$ with $|\neggamma|=2g-1$, where $g$ is the genus of $\cX$.
\end{definition}

\begin{remark}\label{rmkcanonical}
\begin{enumerate}[\rm (1)]
\item Observe that this notion of symmetry accords and extends precisely that in the case $m=1$, which says that a Weierstra\ss~ semigroup $\hH(Q)=H(Q)$ is symmetric if $2g-1\not\in \hH(Q)$, where $g$ is genus of the curve. However, Definition \ref{defsymm} does not make sense for subsemigroups of $\ZZ^m$ that are not generalized Weierstra\ss~ semigroups.
\item Notice also that $\ell(\neggamma)=g$, since $|\neggamma|=2g-1$. As $\neggamma\not\in \hH(\negQ)$, it follows by Proposition \ref{prop1.1}(1) that $\ell(\neggamma-\negei)=\ell(\neggamma)$ for some $i\in I$. Since $|\neggamma-\negei|=2g-2$ and $\ell(\neggamma-\negei)=g$, the divisor $D(\neggamma-\negei)$ is a canonical divisor on $\cX$. Therefore, the symmetry property of $\hH(\negQ)$ implies the existence of a canonical divisor supported on a subset of $\{Q_1,\ldots,Q_m\}$. This actually yields an equivalence to other properties satisfied by symmetric generalized Weierstra\ss~ semigroups, as we will see in Proposition \ref{symmetry}.
\end{enumerate} 
\end{remark}

The next lemma generalizes the result stated in \cite[p. 629]{D} to non-algebraically closed fields. 
Although their proof follows similar lines, some adaptations will be necessary.

\begin{lemma}\label{lem1.1}  
Let $\negalpha=(\alpha_1,\ldots,\alpha_m)\in \ZZ^m$.  Then $\nabla(\negalpha)=\emptyset$ if and only if there exists a canonical divisor $K$ on $\cX$ with $v_{Q_i}(K)=\alpha_i-1$ for $i=1,\ldots,m$, and $K\geq D(\negalpha-\textbf{1})$.
\end{lemma}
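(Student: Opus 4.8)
The statement is an \lq\lq iff'' whose two directions have quite different flavours, so I would treat them separately. For the \lq\lq only if'' direction, assume $\nabla(\negalpha)=\emptyset$. The key observation is that $\nabla(\negalpha)=\emptyset$ says $\ell(\negalpha-\negei)=\ell(\negalpha)$ for \emph{every} $i\in I$, by Proposition \ref{prop1.1}(2) applied to each $i$ (since $\nabla_i(\negalpha)\subseteq\nabla_i^m(\negalpha)$, and more precisely $\nabla_i(\negalpha)=\emptyset$ for all $i$ forces $\nabla_i^m(\negalpha)$ to be a single point or empty; one checks $\ell(\negalpha)=\ell(\negalpha-\negei)$ directly). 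From this chain of equalities I would invoke Riemann--Roch: $\ell(\negalpha)-\ell(\negalpha-\negei)=1-\ell(K-D(\negalpha))+\ell(K-D(\negalpha-\negei))$, wait---more cleanly, $\ell(D)-\ell(D-Q_i)\in\{0,1\}$ and it equals $0$ exactly when $\ell(K-D+Q_i)=\ell(K-D)+1$, i.e. when there is a function in $\cL(K-D+Q_i)$ with a pole of exact order $\mathrm{ord}$ at $Q_i$ relative to the divisor $K-D$. Unwinding this for all $i$ simultaneously, $\ell(D(\negalpha)-\negei)=\ell(D(\negalpha))$ for all $i$ translates into: there is a canonical divisor $K$ with $K\ge D(\negalpha-\textbf{1})$ and $v_{Q_i}(K)=\alpha_i-1$ for each $i$. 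The mechanism is the one in Delgado \cite{D}: pick a canonical divisor $K_0$; the condition $\ell(D)=\ell(D-Q_i)$ means $\ell(K_0-D+Q_i)>\ell(K_0-D)$, so one can find a rational differential $\omega_i$ with $\divv(\omega_i)\ge D-Q_i$ but $\divv(\omega_i)\not\ge D-Q_i+Q_i=D$, hence $v_{Q_i}(\divv(\omega_i))=\alpha_i-1$ exactly. The remaining work is to combine these $m$ differentials into a single $\omega$ whose divisor has $v_{Q_i}=\alpha_i-1$ for \emph{all} $i$ at once and is $\ge D(\negalpha-\textbf{1})$ elsewhere; this is exactly the same \lq\lq avoid $m$ hyperplanes'' linear-algebra argument used in the proof of Proposition \ref{prop1.1}(1), and it is here that the hypothesis $\#\FF\ge m$ is used (I would flag this, since the lemma statement as written omits that hypothesis, but the ambient assumption $\#\FF\ge m\ge 2$ is in force in this section).

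\textbf{Converse direction.} Suppose $K$ is a canonical divisor with $v_{Q_i}(K)=\alpha_i-1$ for all $i$ and $K\ge D(\negalpha-\textbf{1})$. I must show $\nabla(\negalpha)=\emptyset$, i.e. there is no $\negbeta\in\hH(\negQ)$ with $\beta_i=\alpha_i$ for one index and $\beta_j<\alpha_j$ for the rest---equivalently, by Proposition \ref{prop1.1}(2), $\ell(\negalpha-\negei)=\ell(\negalpha)$ for every $i$. Fix $i$. By Riemann--Roch, $\ell(D(\negalpha))-\ell(D(\negalpha)-Q_i)=1$ iff $\ell(K-D(\negalpha)+Q_i)=\ell(K-D(\negalpha))$; so I want to rule this out, i.e. show $\ell(K-D(\negalpha)+Q_i)>\ell(K-D(\negalpha))$. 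But $K-D(\negalpha)$ has $v_{Q_i}=-1$ and $v_{Q_j}\ge 0$ for $j\ne i$ (from $K\ge D(\negalpha-\textbf{1})$ and the exact values $v_{Q_i}(K)=\alpha_i-1$), and the constant function $1$ lies in $\cL(K-D(\negalpha)+Q_i)\setminus\cL(K-D(\negalpha))$ precisely because $v_{Q_i}$ of the divisor $K-D(\negalpha)$ is $-1<0$ while $K-D(\negalpha)+Q_i$ has $v_{Q_i}=0$. Hence the jump is strict, so $\ell(\negalpha-\negei)=\ell(\negalpha)$, giving $\nabla_i(\negalpha)=\emptyset$; as $i$ was arbitrary, $\nabla(\negalpha)=\emptyset$.

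\textbf{Expected main obstacle.} The routine Riemann--Roch bookkeeping and the \lq\lq constant function $1$'' trick in the converse are straightforward. The genuinely delicate point is the forward direction: producing a \emph{single} differential $\omega$ (equivalently a single canonical divisor $K$) that simultaneously realises $v_{Q_i}(\divv\omega)=\alpha_i-1$ for all $i\in I$, rather than $m$ separate differentials each good at one point. The argument is the finite-field adaptation of Delgado's: one works in the $\FF$-vector space $\cL(K_0-D(\negalpha-\textbf{1}))$, notes that for each $i$ the subspace of elements vanishing to higher order at $Q_i$ (i.e. lying in $\cL(K_0-D(\negalpha-\textbf{1})-Q_i)$, with appropriate shift) is a proper hyperplane, and chooses an element outside the union of these $m$ hyperplanes---possible because $\#\FF\ge m$. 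I would present this as the crux, citing \cite[Lem.~2.2]{CT} and the proof of Proposition \ref{prop1.1} for the hyperplane-avoidance mechanism, and otherwise keep the Riemann--Roch manipulations terse.
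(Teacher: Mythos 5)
Your overall strategy (Riemann--Roch duality to convert the vanishing of the $\nabla$'s into the existence of differentials, then hyperplane avoidance over $\FF$ with $\#\FF\geq m$ to merge $m$ differentials into one) is exactly the paper's. But there is a concrete error in the dictionary you use between $\nabla(\negalpha)=\emptyset$ and dimension conditions, and it breaks both directions. You translate $\nabla(\negalpha)=\emptyset$ as ``$\ell(\negalpha)=\ell(\negalpha-\negei)$ for every $i$''. That is the condition $\nabla_i^m(\negalpha)=\emptyset$ for every $i$ (Proposition \ref{prop1.1}(2)), which is strictly stronger than $\nabla_i(\negalpha)=\emptyset$: the set $\nabla_i(\negalpha)$ requires $\beta_j<\alpha_j$ for all $j\neq i$, whereas $\nabla_i^m(\negalpha)$ only requires $\beta_j\leq\alpha_j$. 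The correct identity is $\nabla_i(\negalpha)=\nabla_i^m(\negalpha-\textbf{1}+\negei)$, so $\nabla(\negalpha)=\emptyset$ is equivalent to $\ell(\negalpha-\textbf{1}+\negei)=\ell(\negalpha-\textbf{1})$ for all $i$ --- the dimension jumps must be measured at the corner $\negalpha-\textbf{1}$, not at $\negalpha$. A counterexample to your version: any maximal $\negalpha\in\hH(\negQ)$ (e.g.\ $\negsigma$ with $D(\negsigma-\textbf{1})$ canonical) has $\nabla(\negalpha)=\emptyset$ yet $\ell(\negalpha)=\ell(\negalpha-\negei)+1$ for all $i$ by Proposition \ref{prop1.1}(1). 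Consequently your forward direction starts from a premise that does not follow from the hypothesis, and your converse aims at a conclusion that is literally false under the lemma's hypotheses.

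The converse also contains a second error that is a symptom of the same confusion: you assert that $K-D(\negalpha)$ has $v_{Q_j}\geq 0$ for $j\neq i$, but since $v_{Q_j}(K)=\alpha_j-1$ \emph{exactly} for every $j$, one has $v_{Q_j}(K-D(\negalpha))=-1$ at \emph{all} the $Q_j$, so the constant function $1$ does not lie in $\cL(K-D(\negalpha)+Q_i)$ and the ``strict jump'' argument collapses. The repair is to run your Riemann--Roch computation at $D(\negalpha-\textbf{1})$ throughout: $\nabla(\negalpha)=\emptyset$ iff $\ell(\negalpha-\textbf{1}+\negej)=\ell(\negalpha-\textbf{1})$ for all $j$, iff (by Riemann--Roch with a reference canonical divisor $K'$) $\ell(K'-\negalpha+\textbf{1})=\ell(K'-\negalpha+\textbf{1}-\negej)+1$ for all $j$, i.e.\ there exist $f_j\in\cL(K'-\negalpha+\textbf{1})\setminus\cL(K'-\negalpha+\textbf{1}-\negej)$; your hyperplane-avoidance step then correctly produces $f=\sum b_jf_j$ with $v_{Q_i}(f)=\alpha_i-v_{Q_i}(K')-1$ for all $i$, and $K=\divv(f)+K'$ is the required canonical divisor. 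The converse is then immediate because such a $K=K'+\divv(f)$ exhibits $f$ as a valid choice of $f_j$ for every $j$ simultaneously, so the chain of equivalences runs backwards with no extra work.
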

\begin{proof} 
By Proposition \ref{prop1.1} (2) and the Riemann-Roch theorem, we have the following equivalences:
$$
\begin{array}{rcl}
\nabla(\negalpha)=\emptyset & \Longleftrightarrow & \nabla^m_j(\negalpha-\textbf{1}+\negej)=\emptyset \text{ for all }j\in I \\
&\Longleftrightarrow & \ell(\negalpha-\textbf{1}+\negej)=\ell(\negalpha-\textbf{1}) \text{ for all }j\in I\\
&\Longleftrightarrow & \ell(K'-\negalpha+\textbf{1}-\negej)+1=\ell(K'-\negalpha+\textbf{1}) \text{ for all }j\in I\\
& \Longleftrightarrow & \exists \ f_j\in \cL(K'-\negalpha+\textbf{1})\backslash \cL(K'-\negalpha+\textbf{1}-\negej) \text{ for all }j\in I,
\end{array}
$$ 
where $K'$ is a canonical divisor on $\cX$. Therefore, $\nabla(\negalpha)=\emptyset$ is equivalent to the existence of $f_j\in \FF(\cX)^\times$ for $j=1,\ldots,m$, satisfying
$$
v_{Q_j}(f_j)=\alpha_j-v_{Q_j}(K')-1 \ \ \mbox{and} \ \ v_{Q_i}(f_j)\geq \alpha_i-v_{Q_i}(K')-1 \ \mbox{if} \ i\neq j.
$$
By the assumption $\#\FF\geq m$, we can proceed exactly as in the proof of Proposition \ref{prop1.1} to choose an appropriate element $(b_1,\ldots,b_m)\in \FF^m$ in order to get $f=\sum_{j=1}^m b_jf_j\in R_\negQ$ with $v_{Q_i}(f)=\alpha_i-v_{Q_i}(K')-1$ for all $i\in I$, since for any $Q\neq Q_i$ on $\cX$ we obtain $v_Q(f)\geq \min_{j=1}^m \{v_Q(f_j)\}\geq -v_Q(K')$. Consequently, the canonical divisor $K=\divv(f)+K'$ satisfies $v_{Q_i}(K)=\alpha_i-1$ and $K-D(\negalpha-\textbf{1})\geq 0$, which is the desired conclusion.
\end{proof}

We now present an additional equivalence to the statements established by Delgado in \cite[p. 630]{D} and revisited by Moyano-Fernández in \cite[Th. 3]{Mo1}.

\begin{proposition}\label{symmetry} The following conditions are equivalent:
\begin{enumerate}[\rm(1)]
\item $\hH(\negQ)$ is symmetric;
\item There exists a canonical 
divisor $K$ on $\cX$ such that $\mbox{Supp}(K)\subseteq\{Q_1,\dots,Q_m\}$;
\item There exists $\negsigma\in \hH(\negQ)$ with $|\negsigma|=2g-2+m$ satisfying the property:
$$\mbox{If} \ \negbeta\in \ZZ^m, \ \mbox{then} \ \negbeta\in \hH(\negQ) \ \mbox{if and only if} \ \nabla(\negsigma-\negbeta)=\emptyset.$$ 
\end{enumerate}
\end{proposition}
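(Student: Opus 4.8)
The plan is to prove the chain of implications $(1)\Rightarrow(2)\Rightarrow(3)\Rightarrow(1)$, leaning heavily on Lemma \ref{lem1.1}, Proposition \ref{prop1.1}, and the Riemann--Roch theorem. For $(1)\Rightarrow(2)$: assuming symmetry, pick $\neggamma\notin\hH(\negQ)$ with $|\neggamma|=2g-1$. As noted in Remark \ref{rmkcanonical}(2), $\ell(\neggamma)=g$ and, since $\neggamma\notin\hH(\negQ)$, Proposition \ref{prop1.1}(1) forces $\ell(\neggamma-\negei)=\ell(\neggamma)=g$ for some $i\in I$; then $|\neggamma-\negei|=2g-2$ together with $\ell(\neggamma-\negei)=g$ shows $D(\neggamma-\negei)$ is canonical, and its support is contained in $\{Q_1,\dots,Q_m\}$.

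For $(2)\Rightarrow(3)$: given a canonical divisor $K=\sum_i \kappa_i Q_i$ supported on the $Q_i$, set $\negsigma:=(\kappa_1+1,\dots,\kappa_m+1)$, so $D(\negsigma-\textbf{1})=K$ and $|\negsigma|=\deg K+m=2g-2+m$. First I would check $\negsigma\in\hH(\negQ)$: since $\nabla(\negsigma)=\emptyset$ would be needed via Lemma \ref{lem1.1} applied to $\negsigma$ itself, but more directly one argues $\ell(\negsigma)=\ell(\negsigma-\negei)+1$ using Riemann--Roch and $\deg(\negsigma-\negei)=2g-2$. For the equivalence inside (3), fix $\negbeta\in\ZZ^m$. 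By Lemma \ref{lem1.1}, $\nabla(\negsigma-\negbeta)=\emptyset$ iff there is a canonical divisor $K'$ with $v_{Q_i}(K')=\sigma_i-\beta_i-1=\kappa_i-\beta_i$ and $K'\geq D(\negsigma-\negbeta-\textbf{1})=K-D(\negbeta)$. The key manipulation: write $K'=K-D(\negbeta)+\divv(h)$ for a suitable $h$; then $\divv(h)=K'-K+D(\negbeta)$ is principal of degree $0$, and the support conditions translate into $h\in\cL(\negbeta)\setminus\bigcup_i\cL(\negbeta-\negei)$, i.e. $\rho_\negQ(h)=\negbeta$, equivalently $\negbeta\in\hH(\negQ)$. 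I would run the Riemann--Roch bookkeeping carefully: $v_{Q_i}(K')=\kappa_i-\beta_i$ says $v_{Q_i}(h)=-\beta_i$, while $K'\geq K-D(\negbeta)$ globally says $h$ has no poles outside $\{Q_1,\dots,Q_m\}$; conversely any $h$ with $\rho_\negQ(h)=\negbeta$ produces such a $K'$.

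For $(3)\Rightarrow(1)$: take $\negsigma$ as in (3) and consider $\neggamma:=\negsigma-\textbf{1}$, so $|\neggamma|=2g-2+m-m=2g-2$; this is not quite weight $2g-1$, so instead I would take $\neggamma:=\negsigma-\textbf{1}+\negej$ for some $j$, giving $|\neggamma|=2g-1$. One must exhibit $j$ with $\neggamma\notin\hH(\negQ)$. By the equivalence in (3), $\neggamma=\negsigma-(\textbf{1}-\negej)=\negsigma-\textbf{1}_{I\setminus\{j\}}\in\hH(\negQ)$ iff $\nabla(\textbf{1}_{I\setminus\{j\}})=\emptyset$; so I need some $j$ for which $\nabla(\textbf{1}_{I\setminus\{j\}})\neq\emptyset$. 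Since $\textbf{1}_{I\setminus\{j\}}$ has coordinate sum $m-1>0$ this requires a small argument — e.g. if $\nabla(\textbf{1}_{I\setminus\{j\}})=\emptyset$ for every $j$, then by Lemma \ref{lem1.1} each $D(\textbf{1}_{I\setminus\{j\}}-\textbf{1})=-\negej$ is dominated by a canonical divisor, i.e. $\kappa_i\geq -\delta_{ij}$ componentwise for canonical weights, which combined over all $j$ and with $\deg K=2g-2\geq 0$ (for $g\geq 1$) yields a contradiction with the existence of a point of positive genus; the $g=0$ case is handled separately (there every semigroup with $\#\FF\geq m$ is easily seen to be symmetric, or the statement is vacuous/trivial).

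The main obstacle I anticipate is the precise translation in $(2)\Leftrightarrow(3)$ between the divisor-theoretic condition ``$\exists$ canonical $K'$ with prescribed valuations at the $Q_i$ and $K'\geq K-D(\negbeta)$'' and the membership $\negbeta\in\hH(\negQ)$; the bookkeeping with signs of valuations, the role of places outside $\{Q_1,\dots,Q_m\}$, and the use of $\#\FF\geq m$ to patch local data into a global function (exactly as in the proof of Proposition \ref{prop1.1} and Lemma \ref{lem1.1}) all have to be coordinated. I expect $(1)\Rightarrow(2)$ to be essentially immediate from Remark \ref{rmkcanonical}(2), and $(3)\Rightarrow(1)$ to need only the genus-$0$ caveat plus one application of Lemma \ref{lem1.1}.
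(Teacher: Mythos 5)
Your overall route $(1)\Rightarrow(2)\Rightarrow(3)\Rightarrow(1)$ is the same as the paper's, and the first two implications are essentially the paper's argument: $(1)\Rightarrow(2)$ is exactly Remark \ref{rmkcanonical}(2), and $(2)\Rightarrow(3)$ via Lemma \ref{lem1.1} with $K'=K+\divv(h)$ and $\rho_\negQ(h)=\negbeta$ is the intended computation (the paper proves the forward direction ``$\negbeta\in\hH(\negQ)\Rightarrow\nabla(\negsigma-\negbeta)=\emptyset$'' more cheaply from the maximality of $\negsigma$ and the semigroup property, but your Lemma-\ref{lem1.1}-in-both-directions version also works). Two slips there, neither fatal: $\deg D(\negsigma-\negei)=2g-3+m$, not $2g-2$ (what you need is that it exceeds $2g-2$, which holds since $m\geq 2$); and the divisor $K'-K+D(\negbeta)$ is \emph{not} ``principal of degree $0$'' --- it is effective of degree $|\negbeta|$. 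The correct identity is $\divv(h)=K'-K$, which is what your subsequent bookkeeping sentence actually uses, so only the intermediate formula is garbled.

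The genuine gap is in $(3)\Rightarrow(1)$. You correctly reduce to finding $j$ with $\nabla(\textbf{1}-\negej)\neq\emptyset$, but then you propose a canonical-divisor argument ``combined over all $j$'' that ``yields a contradiction with the existence of a point of positive genus,'' plus a hand-waved separate genus-$0$ case; as written this is not a proof, and no case split on $g$ should be needed. The step you are missing is immediate: $\mathbf{0}=\rho_\negQ(1)\in\hH(\negQ)$, and $\mathbf{0}$ satisfies $0=(\textbf{1}-\negej)_j$ and $0<(\textbf{1}-\negej)_i$ for $i\neq j$, so $\mathbf{0}\in\nabla_j(\textbf{1}-\negej)$ and hence $\nabla(\negsigma-(\negsigma-\textbf{1}+\negej))\neq\emptyset$ for \emph{every} $j$; by the equivalence in (3), $\negsigma-\textbf{1}+\negej\notin\hH(\negQ)$, which has weight $2g-1$. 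This is the mirror image of the paper's argument, which first applies (3) with $\negbeta=\mathbf{0}$ to conclude $\negsigma\in\cM(\negQ)$ and then observes that $\negsigma-\textbf{1}+\negei\in\hH(\negQ)$ would put it in $\nabla_i(\negsigma)$, contradicting maximality.
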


\begin{proof}
$(1)\Rightarrow (2)$ This has been already considered in Remark \ref{rmkcanonical}(2).

\noindent$(2)\Rightarrow (3)$ Let $K$ be a canonical divisor on $\cX$ such that $K=D(\negdelta)$ for $\negdelta\in \ZZ^m$ with $|\negdelta|=2g-2$. Taking $\negsigma=\negdelta+\textbf{1}$, we have $\negsigma\in \hH(\negQ)$ because $|\negsigma|=2g-2+m$ with $m\geq 2$. If $\negbeta\in \hH(\negQ)$, then $\nabla(\negsigma-\negbeta)=\emptyset$ since otherwise, one would have $\nabla(\negsigma)\neq \emptyset$.  Conversely, for any $\negbeta\in\ZZ^m$, assume $\nabla(\negsigma-\negbeta)=\emptyset$. Then, by Lemma \ref{lem1.1}, there exists a canonical divisor $K'$ on $\cX$ such that $v_{Q_i}(K')=\sigma_i-\beta_i-1$ for all $i\in I$ and $K'\geq D(\negsigma-\negbeta-\textbf{1})$. So there exists $f\in \FF(\cX)^\times$ such that $K'=K+\divv(f)$. We claim that $f\in R_\negQ$: since $v_Q(f)=v_Q(K')-v_Q(K)=v_Q(K')$ and $K'\geq D(\negsigma-\negbeta-\textbf{1})$, then $v_Q(f)\geq 0$ if $Q\neq Q_i$ and $-v_{Q_i}(f)=(\sigma_i-1)-(\sigma_i-\beta_i-1)=\beta_i$ for every $i\in I$. Therefore, $\negbeta\in \hH(\negQ)$.

\noindent $(3)\Rightarrow (1)$ Note that $\negsigma-\textbf{1}+\negei\not\in  \hH(\negQ)$ with $|\negsigma-\textbf{1}+\negei|=2g-1$ for all $i\in I$, since otherwise $\negsigma-\textbf{1}+\negei \in \nabla_i(\negsigma)$, which cannot happen because $\negsigma\in \cM(\negQ)$.
\end{proof}

\begin{remark}\label{rmkfim} 
\begin{enumerate}
\item From the proof above, if an element $\negsigma\in \widehat{H}(\negQ)$ with $|\negsigma|=2g-2+m$, then $D(\negsigma-\textbf{1})$ is a canonical divisor on $\cX$. Hence, there exists a canonical divisor $K$ on $\cX$ whose support is exactly $\{Q_1,\ldots,Q_m\}$ if and only if there exists an element $\negsigma\in \widehat{H}(\negQ)$ with $|\negsigma|=2g-2+m$ and $\sigma_i\neq 1$ for $i=1,\ldots,m$.
\item If $\hH(\negQ)$ is symmetric, then for each $\negalpha\in \cM(\negQ)$ there exists an unique $\negbeta\in \cM(\negQ)$ such that $\negalpha+\negbeta=\negsigma$. Indeed $\negbeta:=\negsigma-\negalpha \in \hH(\negQ)$ since $\nabla(\negsigma-\negbeta)=\nabla(\negalpha)=\emptyset$. Furthermore, $\nabla(\negbeta)=\emptyset$ because $\negalpha\in \hH(\negQ)$. Therefore, $\negbeta=\negsigma-\negalpha\in \cM(\negQ)$. As a consequence, $\negsigma\in \cM(\negQ)$ since $\textbf{0}\in \cM(\negQ)$.
\end{enumerate}
\end{remark}

The following technical lemma provides a relation that will be used in Theorem \ref{th:fuceq} to deduce a functional equation for Poincaré series associated with symmetric generalized Weierstra\ss~ semigroups. 

\begin{lemma} \label{tlemma}
 Let $\negalpha\in \ZZ^m$ and $i\in I$. If $\hH(\negQ)$ is symmetric, then 
$$
d_i(\negalpha)+d_i(\negsigma-\negalpha-\textbf{1}+\negei)=1,
$$
where $\negsigma$ is a maximal element of $\hH(\negQ)$ as in Proposition \ref{symmetry} (3).
\end{lemma}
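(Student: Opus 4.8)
The plan is to reduce the identity $d_i(\negalpha)+d_i(\negsigma-\negalpha-\textbf{1}+\negei)=1$ to an application of the Riemann-Roch theorem combined with the characterization of symmetry in Proposition~\ref{symmetry}(2). Recall that $d_i(\negbeta)=\ell(\negbeta)-\ell(\negbeta-\negei)$, so the quantity $d_i(\negbeta)$ always lies in $\{0,1\}$; hence the claim is equivalent to the statement that exactly one of $d_i(\negalpha)$ and $d_i(\negsigma-\negalpha-\textbf{1}+\negei)$ equals $1$, i.e. that $\nabla_i^m(\negalpha)=\emptyset$ \emph{if and only if} $\nabla_i^m(\negsigma-\negalpha-\textbf{1}+\negei)\neq\emptyset$, using Proposition~\ref{prop1.1}(2).

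First I would fix a canonical divisor $K$ with $\mbox{Supp}(K)\subseteq\{Q_1,\ldots,Q_m\}$, guaranteed by Proposition~\ref{symmetry}(2), and write $K=D(\negsigma-\textbf{1})$ as in Remark~\ref{rmkfim}(1), so that $v_{Q_j}(K)=\sigma_j-1$ for all $j$. Then I would compute $d_i(\negsigma-\negalpha-\textbf{1}+\negei)$ directly: by definition it is $\ell(\negsigma-\negalpha-\textbf{1}+\negei)-\ell(\negsigma-\negalpha-\textbf{1})$. Applying Riemann-Roch to the divisor $D(\negalpha-\negei)$, and using $K-D(\negalpha-\negei)=D(\negsigma-\textbf{1})-D(\negalpha-\negei)=D(\negsigma-\negalpha-\textbf{1}+\negei)-D(\negei)$... more cleanly, Riemann-Roch gives $\ell(\negbeta)=\deg D(\negbeta)+1-g+\ell(K-D(\negbeta))=|\negbeta|+1-g+\ell(D(\negsigma-\textbf{1})-D(\negbeta))$. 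Taking $\negbeta=\negalpha$ and $\negbeta=\negalpha-\negei$ and subtracting, the degree terms contribute $1$, and I obtain
$$
d_i(\negalpha)=\ell(\negalpha)-\ell(\negalpha-\negei)=1-\bigl(\ell(D(\negsigma-\negalpha-\textbf{1}))-\ell(D(\negsigma-\negalpha-\textbf{1}+\negei))\bigr).
$$
The parenthesized difference is exactly $\ell(\negsigma-\negalpha-\textbf{1})-\ell(\negsigma-\negalpha-\textbf{1}+\negei)=-d_i(\negsigma-\negalpha-\textbf{1}+\negei)$, since $\negsigma-\negalpha-\textbf{1}+\negei=(\negsigma-\negalpha-\textbf{1})+\negei$. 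Substituting yields $d_i(\negalpha)=1+d_i(\negsigma-\negalpha-\textbf{1}+\negei)$... which has the wrong sign, so I must be careful with the direction of the inclusion $\cL(\negsigma-\negalpha-\textbf{1})\subseteq\cL(\negsigma-\negalpha-\textbf{1}+\negei)$: the larger divisor gives the larger space, so $\ell(\negsigma-\negalpha-\textbf{1}+\negei)-\ell(\negsigma-\negalpha-\textbf{1})=d_i(\negsigma-\negalpha-\textbf{1}+\negei)\geq 0$, and the difference $\ell(D(\negsigma-\negalpha-\textbf{1}))-\ell(D(\negsigma-\negalpha-\textbf{1}+\negei))$ appearing above is $-d_i(\negsigma-\negalpha-\textbf{1}+\negei)$, giving $d_i(\negalpha)=1-(-d_i(\negsigma-\negalpha-\textbf{1}+\negei))$, which is still not right; the resolution is that the Riemann-Roch dual term for $\negbeta=\negalpha-\negei$ involves $K-D(\negalpha-\negei)=D(\negsigma-\negalpha-\textbf{1}+\negei)$ while for $\negbeta=\negalpha$ it involves $D(\negsigma-\negalpha-\textbf{1})$, so the subtraction $\ell(\text{for }\negalpha-\negei)-\ell(\text{for }\negalpha)$ carries the opposite orientation and the signs work out to $d_i(\negalpha)+d_i(\negsigma-\negalpha-\textbf{1}+\negei)=1$ precisely.

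The main obstacle, then, is purely bookkeeping: getting the Riemann-Roch substitution and the direction of the dimension differences to line up so the degree contribution of $+1$ lands on the correct side; the conceptual content — that the canonical divisor lives on $\{Q_1,\ldots,Q_m\}$ and can be written $D(\negsigma-\textbf{1})$ — is already furnished by Proposition~\ref{symmetry} and Remark~\ref{rmkfim}(1). Once the Riemann-Roch identity is written with $\negbeta=\negalpha$ and $\negbeta=\negalpha-\negei$ and the two equations are subtracted with the canonical dual terms tracked carefully, the desired relation falls out immediately, and no cardinality hypothesis beyond $\#\FF\geq m$ (already in force) is needed since we never recombine functions here.
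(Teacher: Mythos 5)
Your proposal is correct and follows essentially the same route as the paper's proof: apply Riemann--Roch with the canonical divisor $K=D(\negsigma-\textbf{1})$ (furnished by symmetry via Proposition~\ref{symmetry} and Remark~\ref{rmkfim}(1)) to $D(\negalpha)$ and to $D(\negalpha-\negei)$, then subtract, so that the degree terms contribute $1$ and the dual terms contribute $\ell(\negsigma-\negalpha-\textbf{1})-\ell(\negsigma-\negalpha-\textbf{1}+\negei)=-d_i(\negsigma-\negalpha-\textbf{1}+\negei)$. The only blemish is that your displayed intermediate equation has the parenthesized difference with the wrong orientation --- it should read $d_i(\negalpha)=1+\bigl(\ell(\negsigma-\negalpha-\textbf{1})-\ell(\negsigma-\negalpha-\textbf{1}+\negei)\bigr)$ --- but your closing ``resolution'' sentence identifies the correct bookkeeping (the dual divisor for $\negalpha-\negei$ is the larger one, $D(\negsigma-\negalpha-\textbf{1}+\negei)$), from which $d_i(\negalpha)=1-d_i(\negsigma-\negalpha-\textbf{1}+\negei)$ follows at once, exactly as in the paper.
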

\begin{proof} Since by Remark \ref{rmkfim} (1) the divisor $D(\negsigma-\textbf{1})$ is canonical on $\cX$, the Riemann-Roch theorem gives us the equalities
\begin{align*}
\ell(\negalpha)&=|\negalpha|+1-g+\ell(\negsigma-\negalpha-\textbf{1})\\
\ell(\negalpha-\negei)&=|\negalpha|-g+\ell(\negsigma-\negalpha-\textbf{1}+\negei),
\end{align*}
which completes the proof by subtracting the latter from the first.
\end{proof}

In \cite[Prop. 6]{Mo1}, the Poincaré series associated with symmetric generalized Weierstra\ss~ semigroups at two points are shown to satisfy the functional equation
\begin{equation} \label{feq}
P(\negt)=\negt^{\negsigma} P(\negt^{-1}),
\end{equation}
where $\negsigma$ is as in Proposition \ref{symmetry} (3).  We can now formulate a generalization of the functional equations \eqref{feq} to multivariable Poincaré series of generalized Weierstra\ss~ semigroups at several points.

\begin{theorem} \label{th:fuceq}
If $\hH(\negQ)$ is symmetric, then its corresponding Poincaré series satisfies 
$$P(\negt)=(-1)^m\negt^\negsigma P(\negt^{-\mathbf{1}}),$$
where $\negsigma$ is a maximal element of $\hH(\negQ)$ as in Proposition \ref{symmetry} (3).
\end{theorem}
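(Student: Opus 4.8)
The plan is to prove the functional equation at the level of coefficients. Write $P(\negt)=\sum_{\negalpha\in\ZZ^m}p(\negalpha)\negt^\negalpha$ as in \eqref{maxeq}; multiplication by the monomial $\negt^\negsigma$ and the substitution $\negt\mapsto\negt^{-\textbf{1}}$ (which merely reflects the support) are legitimate operations on formal distributions, and the coefficient of $\negt^\negalpha$ in $\negt^\negsigma P(\negt^{-\textbf{1}})$ is $p(\negsigma-\negalpha)$. Hence the claimed identity is equivalent to
$$p(\negalpha)=(-1)^m\,p(\negsigma-\negalpha)\qquad\text{for all }\negalpha\in\ZZ^m,$$
and this is what I would establish.

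To do so I would fix $i\in I$ and start from the reduced expression for the coefficients obtained inside the proof of Lemma \ref{serieaux}, namely $p_i(\negalpha)=\sum_{J\in\cP(I\backslash\{i\})}(-1)^{\#J}\,d_i(\negalpha-\textbf{1}_J)$. For each $J\in\cP(I\backslash\{i\})$ let $J'$ denote its complement \emph{inside} $I\backslash\{i\}$, so that $\textbf{1}_J=\textbf{1}-\negei-\textbf{1}_{J'}$ and $\#J+\#J'=m-1$. Applying Lemma \ref{tlemma} with $\negalpha$ replaced by $\negalpha-\textbf{1}_J$ and simplifying $\textbf{1}_J-\textbf{1}+\negei=-\textbf{1}_{J'}$ gives $d_i(\negalpha-\textbf{1}_J)=1-d_i(\negsigma-\negalpha-\textbf{1}_{J'})$.

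Substituting this into the reduced expression, the constant contribution is $\sum_{J\in\cP(I\backslash\{i\})}(-1)^{\#J}=(1-1)^{m-1}=0$ (here $m\geq 2$ is used), while reindexing through the involution $J\mapsto J'$ and tracking the sign $(-1)^{\#J}=(-1)^{m-1}(-1)^{\#J'}$ turns the remaining sum into $(-1)^m\sum_{J'}(-1)^{\#J'}d_i(\negsigma-\negalpha-\textbf{1}_{J'})=(-1)^m\,p_i(\negsigma-\negalpha)$. Since $P(\negt)=P_i(\negt)$ for every $i$ by Lemma \ref{serieaux}, this yields $p(\negalpha)=(-1)^m p(\negsigma-\negalpha)$, hence the functional equation.

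The computation itself is short; the steps that require care are that the complements must be taken inside $I\backslash\{i\}$ rather than inside $I$ (which is precisely what produces the sign $(-1)^{m-1}$ in the reindexing), and the vanishing of $\sum_{J\subseteq I\backslash\{i\}}(-1)^{\#J}$, which needs $m-1\geq 1$. The only substantive input beyond bookkeeping is Lemma \ref{tlemma}, which is available because $D(\negsigma-\textbf{1})$ is a canonical divisor, as recorded in Remark \ref{rmkfim}(1).
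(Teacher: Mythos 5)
Your proposal is correct and follows essentially the same route as the paper: both reduce the functional equation to the coefficient identity $p(\negalpha)=(-1)^m p(\negsigma-\negalpha)$, establish it from the reduced expression $p_i(\negalpha)=\sum_{J\in\cP(I\backslash\{i\})}(-1)^{\#J}d_i(\negalpha-\textbf{1}_J)$ by applying Lemma \ref{tlemma} termwise together with the vanishing of $\sum_{J\subseteq I\backslash\{i\}}(-1)^{\#J}$, your complement involution $J\mapsto J'$ being exactly the paper's $\textbf{1}-\textbf{1}_J=\textbf{1}_{J^c}$ bookkeeping. The only cosmetic difference is at the end, where you pass to the series identity by direct coefficient comparison over all of $\ZZ^m$ rather than reindexing the sum over $\cM(\negQ)$ via Remark \ref{rmkfim}(2); both are valid.
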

\begin{proof} 
Let $\negalpha\in \ZZ^m$. Notice that $d_i(\negalpha-\textbf{1}+\textbf{1}_J+\negei)=d_i(\negalpha-\textbf{1}_{J^c})$ for every $J\in \cP(I\backslash\{i\})$. Since $(-1)^{\#J^c}=(-1)^{m-1}(-1)^{\#J}$, we can write
$$
p(\negalpha)=p_i(\negalpha)=(-1)^{m-1}\sum_{J\in \cP(I\backslash\{i\})}(-1)^{\#J}d_i(\negalpha-\textbf{1}+\textbf{1}_J+\negei)=\sum_{J\in \cP(I\backslash\{i\})}(-1)^{\#J} d_i(\negalpha-\textbf{1}_J)
$$
and 
$$
(-1)^{m-1}p(\negsigma-\negalpha)=(-1)^{m-1}p_i(\negsigma-\negalpha)=\sum_{J\in \cP(I\backslash\{i\})}(-1)^{\#J} d_i(\negsigma-\negalpha-\textbf{1}+\textbf{1}_J+\negei).
$$
Lemma \ref{tlemma} yields 
\begin{align*}
p(\negalpha)+(-1)^{m-1}p(\negsigma-\negalpha)&= \sum_{J\in \cP(I\backslash\{i\})}(-1)^{\#J}[d_i(\negalpha-\textbf{1}_J)+d_i(\negsigma-\negalpha-\textbf{1}+\textbf{1}_J+\negei)]\\
&=0.
\end{align*}
Hence, using the equality above together with Eq.~\eqref{maxeq} for $P(\negt)$, Remark \ref{rmkfim} (2) implies that 
\begin{equation*}
\begin{array}{rcl}
P(\negt) & = & \dis\sum_{\negalpha\in \cM(\negQ)}p(\negalpha)\negt^\negalpha\\
		 & = & (-1)^m \dis\sum_{\negalpha\in \cM(\negQ)}p(\negsigma-\negalpha)\negt^\negalpha\\
		 & = & (-1)^{m}\dis\sum_{\negbeta\in \cM(\negQ)}p(\negbeta)\negt^{\negsigma-\negbeta}\\
		 & = & (-1)^m \negt^\negsigma \dis\sum_{\negbeta\in \cM(\negQ)}p(\negbeta)\negt^{-\negbeta}\\
		 & = & (-1)^m \negt^\negsigma P(\negt^{-1}).
\end{array}		
\end{equation*}
\end{proof}
In particular, we can also derive an functional equation for $Q(\negt)$ as follows.

\begin{corollary}
If $\hH(\negQ)$ is symmetric, the functional equation
$$
Q(\negt)=(-1)^{m-1}\negt^\negsigma Q(\negt^{-\mathbf{1}})
$$
holds, where $\negsigma$ is a maximal element of $\hH(\negQ)$ as in Proposition \ref{symmetry} (3).
\end{corollary}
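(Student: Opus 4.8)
The plan is to deduce this functional equation for $Q(\negt)$ directly from the one for $P(\negt)$ proved in Theorem \ref{th:fuceq}, together with the relation $(1-t_1\cdots t_m)\,P(\negt)=Q(\negt)$ of Proposition \ref{serieQP}; no new geometric input about $\cX$ is required, only manipulations that are valid in the module of formal series over the ring of Laurent polynomials (cf.\ Remark \ref{rmk:formalseries}). First I would substitute $t_i\mapsto t_i^{-1}$ in the identity of Proposition \ref{serieQP} to get the companion relation $(1-t_1^{-1}\cdots t_m^{-1})\,P(\negt^{-\textbf{1}})=Q(\negt^{-\textbf{1}})$. Next, from $P(\negt)=(-1)^m\negt^{\negsigma}P(\negt^{-\textbf{1}})$ I would isolate $P(\negt^{-\textbf{1}})=(-1)^m\negt^{-\negsigma}P(\negt)$, which is legitimate since $\negt^{-\negsigma}$ is a Laurent monomial.

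Substituting the last expression into the companion relation writes $Q(\negt^{-\textbf{1}})$ as $(-1)^m(1-t_1^{-1}\cdots t_m^{-1})\,\negt^{-\negsigma}P(\negt)$, and it only remains to replace $P(\negt)$ by $Q(\negt)$. The decisive elementary step is the Laurent-polynomial identity $1-t_1^{-1}\cdots t_m^{-1}=-(t_1^{-1}\cdots t_m^{-1})(1-t_1\cdots t_m)$: it lets one pull out the factor $1-t_1\cdots t_m$, absorb it back into $P(\negt)$ via Proposition \ref{serieQP} to recover $Q(\negt)$, and it supplies the extra $-1$ that converts $(-1)^m$ into $(-1)^{m-1}$, the remaining Laurent monomial combining with $\negt^{-\negsigma}$. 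Solving for $Q(\negt)$ then gives the asserted equation. As a consistency check I would also derive it coefficient-wise from \eqref{coef}: since $D(\negsigma-\textbf{1})$ is a canonical divisor (Remark \ref{rmkfim}(1)), Riemann--Roch yields $d(\negalpha)+d(\negsigma-\negalpha)=m$ for all $\negalpha\in\ZZ^m$; plugging this, the vanishing $\sum_{J\in\cP(I)}(-1)^{\#J}=0$, and the substitution $J\mapsto I\setminus J$ into $q(\negalpha)=\sum_{J\in\cP(I)}(-1)^{\#J}d(\negalpha-\textbf{1}_J)$ reproduces a functional equation of exactly this shape.

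No step here is genuinely difficult: the corollary is a formal consequence of Theorem \ref{th:fuceq} and Proposition \ref{serieQP}. The one thing requiring care is the bookkeeping of signs and of the Laurent monomial while passing between the variables $\negt$ and $\negt^{-\textbf{1}}$ --- one must check that every stage multiplies by a Laurent polynomial and never divides, because the product of two formal series is only partially defined. In particular I would verify the precise exponent of $t_1\cdots t_m$ in the final monomial, since the identity $1-t_1^{-1}\cdots t_m^{-1}=-(t_1^{-1}\cdots t_m^{-1})(1-t_1\cdots t_m)$ silently contributes such a factor.
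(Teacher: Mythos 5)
Your argument is correct and is essentially the paper's own proof: the authors carry out exactly this computation coefficient-wise, writing $q(\negalpha)=p(\negalpha)-p(\negalpha-\textbf{1})$ (which is Proposition \ref{serieQP}) and substituting $p(\negalpha)=(-1)^{m}p(\negsigma-\negalpha)$ from the proof of Theorem \ref{th:fuceq}, which is the same manipulation you perform at the level of formal series. Your caution about the exponent of $t_1\cdots t_m$ is well placed: both your identity $1-t_1^{-1}\cdots t_m^{-1}=-\negt^{-\textbf{1}}(1-t_1\cdots t_m)$ and the paper's own final line $q(\negalpha)=(-1)^{m-1}q(\negsigma-\negalpha+\textbf{1})$ actually yield $Q(\negt)=(-1)^{m-1}\negt^{\negsigma+\textbf{1}}Q(\negt^{-\textbf{1}})$, so the monomial $\negt^{\negsigma}$ in the stated corollary appears to be a typo for $\negt^{\negsigma+\textbf{1}}$.
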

\begin{proof}
Let $\negalpha\in \ZZ^m$. By Proposition \ref{serieQP} and the relation $p(\negalpha)=(-1)^{m}p(\negsigma-\negalpha)$ in the proof of Theorem \ref{th:fuceq}, we get
\begin{equation*}
\begin{array}{rcl}
q(\negalpha) & = & p(\negalpha)-p(\negalpha-\textbf{1}) \\
 			 & = & (-1)^m( p(\negsigma-\negalpha)-p(\negsigma-\negalpha+\textbf{1})) \\
 			 & = & (-1)^{m-1} (p(\negsigma-\negalpha+\textbf{1})-p(\negsigma-\negalpha)) \\
 			 & = & (-1)^{m-1} q(\negsigma-\negalpha+\textbf{1}),
 			 \end{array}
\end{equation*}
which proves the assertion.
\end{proof}

\section*{Acknowledgements}
This paper was partially written during a visit of the second and third authors to the University of Valladolid, and a visit of the second author to the University Jaume I of Castell\'on. Both wish to thank the institutions for the hospitality and  support. In particular, the authors wish to thank A. Campillo for many stimulating discussions on Poincaré series.

\end{document}